\newcommand{\R}{\mathbb{R}}
\providecommand{\abs}[1]{\lvert#1\rvert}
\providecommand{\diag}{{\rm diag}}
\newcommand{\lift}[1]{{\uparrow \!\!  #1}}
\newcommand{\rs}[1]{{\mbox{\scriptsize \sc #1}}}
\newcommand{\sr}[1]{{\cal #1}}
\newcommand{\dd}[1]{\mathbb{#1}}
\newcommand{\ray}{{\rm r}}
\newcommand{\br}[1]{\langle #1 \rangle}
\newcommand{\brb}[1]{\big\langle #1 \big\rangle}
\newtheorem{lemma}{Lemma}
\newtheorem{corollary}{Corollary}
\newtheorem{theorem}{Theorem}
\newtheorem{question}{Question}
\theoremstyle{definition}
\numberwithin{equation}{section}
\newtheorem{remark}{Remark}
\newcommand{\eq}[1]{(\ref{eq:#1})}
\newcommand{\eqn}[1]{(\ref{eqn:#1})}
\newcommand{\lem}[1]{Lemma~\ref{lem:#1}}
\newcommand{\cor}[1]{Corollary~\ref{cor:#1}}
\newcommand{\thr}[1]{Theorem~\ref{thr:#1}}
\newcommand{\que}[1]{Question~\ref{que:#1}}
\newcommand{\rem}[1]{Remark~\ref{rem:#1}}
\newcommand{\fig}[1]{Figure~\ref{fig:#1}}
\newcommand{\app}[1]{\ref{app:#1}}
\newcommand{\sectn}[1]{Section~\ref{sect:#1}}
\newcommand{\thrt}[1]{\ref{thr:#1}}
\newcommand{\sect}[1]{\ref{sect:#1}}
\journal{Stochastic Processes and Their Applications}
\begin{document}

\begin{frontmatter}

\title{Decomposable stationary distribution of\\ a multidimensional SRBM}

\author[cu]{J. G. Dai}
\ead{jim.dai@cornell.edu}

\author[tus]{Masakiyo Miyazawa\corref{cor1}}
\ead{miyazawa@rs.tus.ac.jp}

\author[cu]{Jian Wu}
\ead{jw926@cornell.edu}

\cortext[cor1]{Corresponding author}

\address[cu]{School of Operations Research and Information Engineering, Cornell University, Ithaca, NY 14853}
\address[tus]{Department of Information Sciences, Tokyo University of Science, Noda, Chiba 278-8510, Japan}

\begin{abstract}
We focus on the stationary distribution of a multidimensional semimartingale reflecting Brownian motion (SRBM) on a nonnegative orthant. Assuming that the stationary distribution exists and is decomposable---equal to the product of two marginal distributions, we prove that these marginal distributions are the stationary distributions of some lower dimensional SRBMs, whose data can be explicitly computed through that of the original SRBM. Thus, under the decomposability condition, the stationary distribution of a high dimensional SRBM can be computed through those of lower dimensional SRBMs.  Next, we derive necessary and sufficient conditions for some classes of SRBMs to satisfy the decomposability condition.
\end{abstract}

\begin{keyword}
Semimartingale reflecting Brownian motion, queueing network, stationary distribution,
decomposability, marginal distribution, product form approximation,
completely-${\cal S}$ matrix;
\end{keyword}

\end{frontmatter}

\section{Introduction}
\label{sect:introduction}
We are concerned with a $d$-dimensional semimartingale reflecting
Brownian motion (SRBM) that lives on the nonnegative orthant
$\mathbb{R}^d_+$, where $\dd{R}_{+}$ is the set of all nonnegative
real numbers. The SRBM is specified by a $d \times d$ covariance matrix
$\Sigma$, a drift vector $\mu\in \dd{R}^d$, and a $d \times d$ reflection
matrix $R$. Namely, $(\Sigma, \mu, R)$ is the modeling primitives of
the SRBM on $\mathbb{R}^d_+$. As usual, we assume that $\Sigma$ is
positive definite and $R$ is completely-$\sr{S}$. See \app{matrix}
for the definitions of matrix classes used in this paper. A $(\Sigma, \mu,
R)$-SRBM $Z$ has the following semimartingale representation:
\begin{eqnarray}
&&  Z(t) = Z(0) +  X(t) + RY(t) \in \dd{R}^d_+, \quad t\ge 0, \label{eq:RBM1}\\
&&  X=\{X(t), t\ge 0\} \text{ is a  $(\Sigma, \mu)$-Brownian motion},\qquad\label{eq:RBM2} \\
&&  Y(0)=0, Y(\cdot) \text{ is nondecreasing,}\label{eq:RBM3}\\
&&  \int_0^\infty Z_i(t) d Y_i(t) =0 \text{ for } i=1, \ldots, d. \label{eq:RBM4}
\end{eqnarray}
For the complete definition of an SRBM $Z=\{Z(t); t \ge 0\}$, we refer to Section~A.1 of \cite{DaiMiya2011} (see \cite{TaylWill1993,Will1995} for more details). 

Our focus is on the stationary distribution of the $d$-dimensional
SRBM. 
Throughout this paper, we
 assume that the stationary distribution exists. As a consequence, the
 primitive data satisfies the following condition:
\begin{eqnarray}
\label{eq:stability}
  \mbox{$R$ is nonsingular, and $R^{-1} \mu < 0$,}
\end{eqnarray}
where vector inequality is interpreted entrywise. If $R$ is either a $\sr{P}$-matrix for $d=2$ or an $\sr{M}$-matrix for
an arbitrary $d\ge 1$, then this condition is known to be sufficient, but generally not for $d \ge 3$ (see, e.g.,\cite{BramDaiHarr2010}).

Let $J \equiv \{1,2,\ldots, d\}$. A pair $(K, L)$ is said to be a
partition of $J$ if $K\cup L=J$ and $K\cap L=\emptyset$.  We consider
conditions for the stationary distribution of an SRBM in $\dd{R}^d_+$ to
be the product of two 
marginal distributions associated with a partition $(K,L)$
of the set $J$. Such a stationary distribution is said to be
\emph{decomposable} with respect to $K$ and $L$. We have two major
contributions for the decomposability of the stationary distribution.

We first characterize, in \thr{marginals}, two marginal distributions
associated with a partition $(K,L)$  under the decomposability
assumption.  We prove that they are the stationary distributions of
some $|K|$- and $|L|$-dimensional SRBMs, where $|U|$ denotes the cardinality
of a set $U$. We also identify the data for these lower dimensional
SRBMs. Thus, under the decomposability assumption, we can obtain the
stationary distribution of the original SRBM by computing those of the
lower dimensional ones.

However, this characterization of the marginal distributions is not
sufficient for the decomposability. So, we next consider necessary and
sufficient conditions for the decomposability. We obtain those
conditions for several classes of SRBMs (\thr{decomposition} and
\cor{decomposition 1}). These classes include diffusion limits of
tandem queues and of queueing networks that have two sets of nodes
with feed-forward routing between these two sets. Note that the
decomposability does not mean a complete separation of such a network
into two subnetworks. We illustrate 
a tandem queue next. 

Consider a $d$-station generalized Jackson network in series, which is
referred to as a tandem queue. In this tandem queue, the interarrival
times to station $1$ are assumed to be iid with mean $1/\beta_{0}$ and
coefficient of variation (CV) $c_0$. The service times at
station $i$ are assumed to be iid with mean $1/\beta_{i}$ and CV
$c_i$, $i\in J$ (see \fig{dd-tandem}).  
\begin{figure}[h]
 	\centering
	\includegraphics[height=2.5cm]{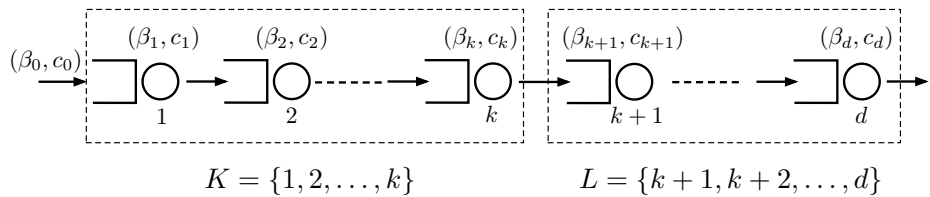}
	\caption{The $d$-station tandem queue partitioned into two blocks}
	\label{fig:dd-tandem}
\end{figure} 
The diffusion limit of queue length process for this tandem queue is known to be the $d$-dimensional SRBM with the following reflection matrix $R$, the covariance matrix $\Sigma$, and the drift vector $\mu$ (see, e.g., Section 7.5 of \cite{ChenYao2001}).
\begin{eqnarray}
\label{eq:tandem R}
 && R_{ij} = \left\{\begin{array}{ll}
  1, \quad & i=j \mbox{ for } i=1,2,\ldots, d,\\
  -1, \quad & j = i-1 \mbox{ for } i=2,\ldots,d,\\
  0, \quad & \mbox{otherwise},
  \end{array} \right.
\end{eqnarray}
\begin{eqnarray}
\label{eq:tandem Sigma}
 && \Sigma_{ij} =  \beta_0\times
 \begin{cases}
 c_{i-1}^{2} + c_{i}^{2},  & i=j \mbox{ for } i=1,2,\ldots, d,\\
  -c^{2}_{i-1},  & j = i-1 \mbox{ for } i=2,\ldots,d,\\
  -c^{2}_{i},  & j = i+1 \mbox{ for } i=1,\ldots,d-1,\quad\\
  0, & \mbox{otherwise},
 \end{cases}\\
\label{eq:tandem mu}
 && \mu_{i} = \beta_{i-1} - \beta_{i}  \text{ for }  i =1, \ldots, d.
\end{eqnarray}
For example, when $d=3$, $R$ and $\Sigma$ are 
 given by 
\begin{equation*}
  R= \begin{pmatrix}
    1 & 0 & 0 \\
   -1 & 1 & 0 \\
   0  & -1 & 1 
  \end{pmatrix}, 
\quad 
  \Sigma =  \beta_0
\begin{pmatrix}
  c^{2}_0+c^{2}_1 & -c^{2}_1 & 0 \\ 
 -c^{2}_1 & c^{2}_1+c^{2}_2  & -c^{2}_2\\
  0 & -c^{2}_2 & c^{2}_2+ c^{2}_3 
\end{pmatrix}.
\end{equation*}
We assume  that
$\Sigma$ is nonsingular and  condition \eq{stability} is satisfied,
which is equivalent to $\beta_{0} < \beta_{i}$ for $i=1,2,\ldots,d$. 
It follows from \cite{HarrWill1987} that the SRBM $Z$ has a
unique stationary distribution $\pi$. For each $k \in \{1, 2, \ldots, d-1\}$, let
\begin{eqnarray*}
  K \equiv \{1,2,\ldots, k\}, \qquad L \equiv \{k+1,\ldots, d\}.
\end{eqnarray*}
See \fig{dd-tandem} for an illustration of these sets. It will be shown
in \cor{decomposition 1} that if 
\begin{eqnarray}
\label{eq:de 1}
 && c_0=c_i \quad \text{ for } i = 1, \ldots, k,
\end{eqnarray}
then the $d$-dimensional stationary distribution is decomposable with respect to $K$
and $L$.

We are motivated by applications of SRBMs and our recent work
\cite{DaiMiyaWu2014a}. Multidimensional SRBMs have been widely used
for queueing applications \cite{Will1996} and some areas including 
mathematical finance \cite{ShkoKara2013}. For these applications, it
is important 
to obtain the stationary distribution in a 
tractable form. However, this is a very hard problem even for
$d=2$ unless 
the $d$-dimensional SRBM has a product form stationary distribution.
A multidimensional distribution is said to have a product form if it
is the product of one dimensional marginal distributions. 

It is shown in \cite{HarrWill1987a}  that this product form holds 
for the stationary distribution of an SRBM $Z$ if
and only if the following skew symmetry condition  
\begin{eqnarray}
  \label{eq:skew symmetric}
   2\Sigma =R\diag(R)^{-1}\diag(\Sigma) + \diag(\Sigma)\diag(R)^{-1}R^{\rs{t}}
\end{eqnarray}
is satisfied, where for a matrix $A$, $\diag(A)$ denotes the diagonal
matrix whose entries are diagonals of $A$, and $A^{\rs{t}}$ denotes
the transpose of $A$. Furthermore, under \eq{skew symmetric}, the
one-dimensional marginal stationary distribution in the $i$th
coordinate has the exponential 
distribution with mean $1/\alpha_{i}$, where column vector $\alpha
\equiv (\alpha_{1}, \ldots, \alpha_{d})^{\rs{t}}$ is given by 
\begin{eqnarray}
\label{eq:alpha}
  \alpha = -2\diag(\Sigma)^{-1}\diag(R)R^{-1}\mu.
\end{eqnarray}

Thus, the stationary distribution of $Z$ is explicitly obtained under
the skew symmetry condition \eq{skew symmetric}.  However, the
condition \eq{skew symmetric} may be too strong in some
applications. In particular, \eq{alpha} is independent of covariances
$\Sigma_{ij}$ for $i \ne j \in J \equiv \{1,2,\ldots,d\}$. Still, 
product form based approximation is often used even though
its accuracy cannot be assessed when  condition
\eq{skew symmetric} is not satisfied; see, for example,
\cite{KangKellLeeWill2009}.

This product form based approximation may be improved by the decomposability. For example, let us consider the SRBM for the tandem queue depicted in \fig{dd-tandem}, and assume the decomposability condition \eq{de 1}. Then, its $\abs{K}$-dimensional marginal is of product-form, which can be computed easily. 
Its $\abs{L}$-dimensional marginal is the stationary distribution of 
an $\abs{L}$-dimensional SRBM. If one adopts the algorithm in
\cite{DaiHarr1992} to compute the  
stationary distribution of a $d$-dimensional SRBM,  the computational
complexity  
is in the order of $d^{3n}$, where $n$ is the degree of
polynomials used in the numerical approximation; see the discussion on
page 78 of \cite{DaiHarr1992}.
 Therefore, using our decomposition result, the original $d$-dimensional
stationary distribution can be computed in the order of
$\abs{L}^{3n}+d$ operations, a huge saving as compared with $d^{3n}$
when $\abs{L}$ is small or moderate.

In recent years, for $d=2$, the tail asymptotics of the stationary
distribution including decay rates have been well studied (e.g., see
\cite{AvraDaiHase2001,DaiMiya2011,DaiMiya2013}) even though condition
\eq{skew symmetric} is not satisfied.  Those decay rates may be used
for better approximations of a two-dimensional stationary distribution
as recently shown for a two dimensional reflecting random walk in
\cite{LatoMiya2014}. We hope that such a two-dimensional approximation
can be used to develop better approximations for the stationary
distribution of a high dimensional SRBM. Thus, weaker conditions than the product form will be useful to facilitate an approximation for a higher dimensional SRBM. A decomposability condition that is checkable by modeling primitives will considerably widen the applicability of a multidimensional SRBM. 

We are also inspired by geometric interpretations in
\cite{DaiMiyaWu2014a} for the product form characterization. That work
focuses on the product form, but considers characterizations that are
different from the skew symmetric condition \eq{skew symmetric}. Among
them, Corollary 2 of \cite{DaiMiyaWu2014a} shows that, for each pair of
$i \ne j \in J$, the corresponding two-dimensional, marginal
distribution is equal to the stationary distribution of some
two-dimensional SRBM if the $d$-dimensional stationary distribution
has a product form, that is, condition \eq{skew symmetric} is
satisfied. This motivates us to consider the lower dimensional SRBMs
corresponding to the marginal distributions under the decomposability.

Other than the product form case, there are some two-dimensional
  SRBMs whose stationary distributions are obtained in closed
  form. For example, in \cite{DiekMori2009,Fosc1982}, the
    stationary densities have finite
  sums of exponential functions. However, those results are only
  obtained for very limited cases, and their expressions for the stationary densities are quite
  different from decomposability. Thus, we do not have immediate
  benefit of using them to study decomposability. However, we may
  consider them as another type of approximation. Thus, it may be
  interesting to combine them with decomposability to get better
  approximations.

This paper consists of four sections. We present our results,
Theorems \thrt{marginals} and \thrt{decomposition} and their
corollaries, in \sectn{main}. We discuss basic facts and preliminary
results in \sectn{characterizing}. Theorems \thrt{marginals} and
\thrt{decomposition} are proved in Sections \sect{theorem 1} and
\sect{theorem 2}, respectively. We finally remark some future work in
\sectn{concluding}. 

We will use the following notation unless otherwise stated.
\begin{table}[htdp]
\label{tab:primitive}
\begin{center}
\begin{tabular}{ll}
  $J$ & $\{1,2,\ldots, d\}$, and, for $U, V \subset J$\\
  $A^{(U,V)}$ & the $\abs{U} \times \abs{V}$ submatrix of a $d$-dimensional square matrix $A$ whose\\
  & row and column indices are taken from $U$ and $V$, respectively.\\
  $x^{U}$ & $|U|$-dimensional vector with $x^{U}_{i} = x_i$ for $i\in U$,\\
  & where $x^{U}_{i}$ is the $i$-th entry of $x^{U}$\\
  $\lift{x^{U}}$ & $d$-dimensional vector $x$ with $x_{i} = x^{U}_{i}$ for $i \in U$\\
  & and $x_{i} = 0$ for $i \in J \setminus U$\\
  $f^{U}(x^{U})$ & $f(\lift{x^{U}})$ for function $f$ from $\dd{R}^{d}$ to $\dd{R}$\\
   $\br{x, y}$ & $\sum_{i=1}^{d} x_{i} y_{j}$ for $x, y \in \dd{R}^{d}$
\end{tabular}
\caption{A summary of basic notation}
\end{center}
\label{notation 1}
\end{table}

\pagebreak

For example, for $i \in U$, the $i$-th entry of $A^{(U,V)} x^{V}$
is $\sum_{j \in V} [A^{(U,V)}]_{ij} x^{V}_{j}$, where both
$T_{ij}$ and $[T]_{ij}$ denote the $(i,j)$-entry of a matrix $T$.

\section{Main results}
\label{sect:main}

To state our main results, we need the following notation. Let
\begin{eqnarray*}
  Q = R^{-1}.
\end{eqnarray*}
 For a  non-empty set $U \subset J$ such that $Q^{(U,U)}$ is invertible, define
\begin{eqnarray}
 && \tilde{\Sigma}(U) = (Q^{(U,U)})^{-1} (Q \Sigma Q^{\rs{t}})^{(U,U)}
 ((Q^{(U,U)})^{-1})^{\rs{t}}, \quad \label{eq:SigmaU}\\
 && \tilde{\mu}(U) = (Q^{(U,U)})^{-1} (Q \mu)^{U},\label{eq:muU}\\
 && \tilde{R}(U) = (Q^{(U,U)})^{-1}. \label{eq:RU}
\end{eqnarray}

\begin{theorem}
\label{thr:marginals}
  Assume that $R$ is completely-${\cal S}$ and that the
  $d$-dimensional SRBM $Z=\{Z(t), t\ge 0\}$ has a stationary
  distribution $\pi$. Let $K$ and $L$ be a non-empty partition of $J$. Assume
  that $Z^{K}(0)$ and $Z^{L}(0)$ are independent under the stationary
  distribution $\pi$. Then, for $U = K$ and $U=L$,\\ 
 (a) $Q^{(U,U)}$ is invertible, and its inverse matrix $\tilde{R}(U)$ is completely-${\cal S}$.\\
 (b) The $|{U}|$-dimensional $(\tilde{\Sigma}(U)$, $\tilde{\mu}(U)$, $\tilde{R}(U))$-SRBM has a stationary distribution that is equal to the distribution of $Z^{U}(0)$ under $\pi$.
\end{theorem}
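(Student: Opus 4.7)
The plan is to use the Basic Adjoint Relationship (BAR) characterization of SRBM stationary distributions. In stationarity, for every $f\in C^{2}_{b}(\R^{d}_{+})$,
\[
\int \mathcal{L}f\,d\pi + \sum_{i=1}^{d}\int (R^{\rs{t}}\nabla f)_{i}\,d\nu_i = 0,
\]
where $\mathcal{L}$ is the generator of the $(\Sigma,\mu)$-Brownian motion and $\nu_i$ is the boundary measure on the face $\{z\in\R^d_+:z_i=0\}$. The strategy is to feed BAR with a cleverly chosen test function so that the identity collapses to the BAR of the candidate $|K|$-dimensional SRBM, and then invoke uniqueness of SRBM stationary distributions.

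Concretely, I would use $f(z)=h\bigl(Q^{(K,K)}z^{K}+Q^{(K,L)}z^{L}\bigr)=h\bigl((Qz)^{K}\bigr)$ for arbitrary $h\in C^{2}_{b}(\R^{|K|})$. The identity $QR=I$, i.e.\ $\sum_{j}Q_{lj}R_{ji}=\delta_{li}$, is tailor-made for this substitution: each boundary contribution indexed by $i\in L$ vanishes identically, while those with $i\in K$ collapse to $\int\partial_i h((Qz)^{K})\,d\nu_i$. Simultaneously, $\mathcal{L}f(z)=\tilde{\mathcal L}^{Q}_{K}h((Qz)^{K})$, where $\tilde{\mathcal L}^{Q}_{K}$ is the generator with covariance $(Q\Sigma Q^{\rs{t}})^{(K,K)}$ and drift $(Q\mu)^{K}$, so BAR reduces to
\[
\int \tilde{\mathcal L}^{Q}_{K}h((Qz)^{K})\,d\pi + \sum_{i\in K}\int \partial_i h((Qz)^{K})\,d\nu_i = 0. \qquad (\star)
\]
Specializing to $h(y)=e^{\eta\cdot y}$ and using $\pi=\pi_K\otimes\pi_L$, the first integral factors as $\tilde\gamma^{Q}_{K}(\eta)\,\phi^{K}((Q^{(K,K)})^{\rs{t}}\eta)\,\phi^{L}((Q^{(K,L)})^{\rs{t}}\eta)$, where $\phi^{U}$ is the moment generating function of $\pi_U$ and $\tilde\gamma^{Q}_{K}$ is the symbol of $\tilde{\mathcal L}^{Q}_{K}$.

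The crux is the boundary term. I would show that, for each $i\in K$, the boundary measure $\nu_i$ factorizes as $\nu_i=\tilde\nu_i\otimes\pi_L$ for some finite measure $\tilde\nu_i$ on $\{y^K\in\R^{|K|}_+:y^K_i=0\}$. Granting this, the corresponding boundary integral becomes $\tilde\phi_i((Q^{(K,K)})^{\rs{t}}\eta)\,\phi^{L}((Q^{(K,L)})^{\rs{t}}\eta)$; the common factor $\phi^{L}((Q^{(K,L)})^{\rs{t}}\eta)$ then cancels throughout $(\star)$, and the linear change of variables $\tilde\theta^{K}=(Q^{(K,K)})^{\rs{t}}\eta$ turns what remains into the exponential-form BAR of the $(\tilde\Sigma(K),\tilde\mu(K),\tilde R(K))$-SRBM evaluated at $\tilde\theta^{K}$. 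I would prove the factorization of $\nu_i$ by applying BAR to separable test functions $f(z)=g(z^{K})k(z^{L})$, exploiting the product structure of $\pi$ together with a monotone-class argument to upgrade from product to general bounded measurable functions.

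For part (a), the invertibility of $Q^{(K,K)}$ is required to make the change of variables meaningful; it can be obtained from the block-inverse/Schur-complement formula for $R^{-1}$, combined with the fact that the $L$-marginal dynamics force $R^{(L,L)}$ to be invertible. The completely-$\mathcal S$ property of $\tilde R(K)=(Q^{(K,K)})^{-1}$ then follows because the marginal BAR derived above identifies $\pi_K$ as the stationary distribution of an SRBM with reflection matrix $\tilde R(K)$ and nonnegative, finite boundary measures $\tilde\nu_i$; only completely-$\mathcal S$ matrices admit such SRBMs. Uniqueness of SRBM stationary distributions given the primitive data then yields part (b); the case $U=L$ is entirely symmetric. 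The main obstacle will be the factorization $\nu_i=\tilde\nu_i\otimes\pi_L$ for $i\in K$: while intuitively suggested by the independence of $Z^{K}$ and $Z^{L}$ under $\pi$, it couples $\pi$ with the boundary pushing measures in a nontrivial way and is where the most careful analysis must be done.
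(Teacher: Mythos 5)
Your high-level route is the same as the paper's: work with the transformed process $V=QZ$ (your test functions $h((Qz)^K)$), use $QR=I$ to kill the $L$-boundary terms, factor the boundary measures, cancel the common $L$-factor, recognize the marginal BAR, and invoke the converse BAR/uniqueness. The fatal gap is part (a). You assert that invertibility of $Q^{(K,K)}$ comes from the block-inverse/Schur-complement formula together with ``the $L$-marginal dynamics force $R^{(L,L)}$ to be invertible.'' Neither half survives scrutiny: a completely-$\mathcal{S}$ principal block need not be invertible (the $2\times2$ all-ones matrix is completely-$\mathcal{S}$ and singular), and even when $R^{(L,L)}$ is invertible, $Q^{(K,K)}$ equals the inverse of the Schur complement $R^{(K,K)}-R^{(K,L)}(R^{(L,L)})^{-1}R^{(L,K)}$, whose nonsingularity is precisely the thing to be proved. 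The paper's Remark~1 explicitly warns that part (a) ``is not obvious at all,'' and its Lemma~3 proves it probabilistically: supposing $\eta Q^{(K,K)}=0$, an It\^o/truncation argument applied to $e^{-f_n(\langle\eta,V^K(t)\rangle)}$ --- which uses the decomposability through the factorization $\varphi_j(\theta)=\varphi_j^K(\theta^K)\varphi^L(\theta^L)$ --- yields $\eta (Q\Sigma Q^{\rs{t}})^{(K,K)}\eta^{\rs{t}}=0$, hence $\eta Q^{(K,J)}=0$ by positive definiteness of $\Sigma$, hence $\eta Y^K(t)\equiv 0$, contradicting $\nu_j(\mathbb{R}^d_+)=-(Q\mu)_j>0$. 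Invertibility here is a consequence of the independence hypothesis, not of linear algebra, and your proof has no substitute for this step.

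Your argument for the completely-$\mathcal{S}$ property is also circular: ``only completely-$\mathcal{S}$ matrices admit such SRBMs'' concerns existence of the process, but to pass from ``the marginal BAR holds'' to ``$\pi_K$ is the stationary distribution of an SRBM with data $(\tilde\Sigma(K),\tilde\mu(K),\tilde R(K))$'' you must already know $\tilde R(K)$ is completely-$\mathcal{S}$, since that is a hypothesis of the converse BAR theorem (part (b) of the paper's Lemma~1). The paper closes this loop with a direct argument (Lemma~4): from the marginal BAR, letting $\theta_\ell\downarrow-\infty$ gives $[\tilde R(K)]_{\ell\ell}>0$; choosing $\theta_\ell$ so the drift--diffusion term is negative gives $\sum_{j\in K}[\tilde R(K)]_{\ell j}\varphi^K_j>0$ with all $\varphi^K_j>0$, and making the off-$U$ terms small handles every principal submatrix. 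You would need to supply this. Two smaller points: your factorization $\nu_i=\tilde\nu_i\otimes\pi_L$ is indeed the key lemma (the paper proves it by dividing the BAR by $\theta_j$ and sending $\theta_j\downarrow-\infty$, using that $Y_i$ and $Y_j$ do not increase simultaneously), and with $h(y)=e^{\eta\cdot y}$ the argument $(Q^{(K,L)})^{\rs{t}}\eta$ of $\phi^L$ need not be $\le 0$ when $\eta\le 0$, so finiteness of the factored transforms is not guaranteed; the paper avoids this by running the argument with characteristic functions and then recovering the moment-generating-function version by analytic continuation and the boundary uniqueness theorem.
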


\begin{remark}
\label{rem:marginals 0}
  If $R$ is a $\sr{P}$-matrix, then $R$ is invertible and
  $R^{-1}$ is also a $\sr{P}$-matrix 
  by Lemma 6 of \cite{DaiMiyaWu2014a}.
Therefore part (a) is immediate when $R$ is a ${\cal P}$-matrix
  since each $\sr{P}$-matrix is  completely-$\sr{S}$. However,
even under condition \eq{stability},
 the completely-${\cal S}$ property of $R$ does not imply that
 $R^{-1}$ has the same property as shown by the following
  example:
\begin{eqnarray*}
  R = \left(\begin{array}{cc} 1 & 2 \\1& 1\end{array}\right) \mbox{ implies } 
  R^{-1} = \left(\begin{array}{cc} -1 & 2 \\ 1& -1\end{array}\right),
\end{eqnarray*}
where $R$ is  completely-$\sr{S}$, but $R^{-1}$ is not.
 Thus, part (a) of the theorem is not obvious at all. \\
\end{remark}

\thr{marginals} will be proved in \sectn{theorem 1}, and the corollary below is proved in \app{corollary 1}.

\begin{corollary}
\label{cor:marginal 1}
  Under the same assumptions of \thr{marginals}, for $i \in J$, assume
  that $Z^{\{i\}}(0)$ and $Z^{J \setminus \{i\}}(0)$ are independent
  under the stationary distribution $\pi$. Then, $Z^{\{i\}}(0)$ has
  the exponential distribution with mean $1/\lambda_i$,
where 
\begin{eqnarray}
  \label{eq:lambdai}
&&  \lambda_i = \Delta_i Q_{ii}, \\
&&   \Delta_i = -\frac{2 \langle \mu, (Q^{\rs{t}})^{(i)} \rangle}{\langle
    (Q^{\rs{t}})^{(i)},
    \Sigma (Q^{\rs{t}})^{(i)} \rangle}>0,\label{eq:Deltai}
\end{eqnarray}
and $(Q^{\rs{t}})^{(i)}$ is the $i$th column of $Q^{\rs{t}}$. 
\end{corollary}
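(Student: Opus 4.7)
The plan is to apply \thr{marginals} directly with $U=\{i\}$ (and $L = J\setminus\{i\}$), which reduces the statement to the stationary distribution of a one-dimensional SRBM, and then invoke the classical one-dimensional result that any stable reflected Brownian motion on $[0,\infty)$ has an exponential stationary distribution whose rate is given by the skew-symmetric formula \eq{alpha}.

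First, by \thr{marginals}(a), $Q^{(\{i\},\{i\})}=Q_{ii}$ is invertible, so $Q_{ii}\ne 0$; moreover $\tilde{R}(\{i\})=1/Q_{ii}$ must be completely-$\sr{S}$, which in dimension one just means $1/Q_{ii}>0$, hence $Q_{ii}>0$. Next I would compute the three scalar parameters directly from \eq{SigmaU}--\eq{RU}, using that, by the notational convention, the $i$th row of $Q$ equals the transpose of the $i$th column of $Q^{\rs{t}}$:
\begin{eqnarray*}
\tilde{R}(\{i\}) &=& \frac{1}{Q_{ii}}, \\
\tilde{\mu}(\{i\}) &=& \frac{(Q\mu)_i}{Q_{ii}} = \frac{\langle \mu,(Q^{\rs{t}})^{(i)}\rangle}{Q_{ii}}, \\
\tilde{\Sigma}(\{i\}) &=& \frac{(Q\Sigma Q^{\rs{t}})_{ii}}{Q_{ii}^{2}} = \frac{\langle (Q^{\rs{t}})^{(i)},\Sigma(Q^{\rs{t}})^{(i)}\rangle}{Q_{ii}^{2}}.
\end{eqnarray*}

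Now I would apply \thr{marginals}(b): the marginal law of $Z^{\{i\}}(0)$ is the stationary distribution of the one-dimensional $(\tilde{\Sigma}(\{i\}),\tilde{\mu}(\{i\}),\tilde{R}(\{i\}))$-SRBM. In one dimension the skew-symmetry condition \eq{skew symmetric} is automatic, so \eq{alpha} applies and yields an exponential law with rate
\begin{eqnarray*}
\lambda_{i} \;=\; -\,\frac{2\,\tilde{R}(\{i\})\,\tilde{\mu}(\{i\})}{\tilde{\Sigma}(\{i\})}\cdot \frac{1}{\tilde{R}(\{i\})^{-1}}\cdot\tilde{R}(\{i\})\;=\;-\frac{2\,\tilde{\mu}(\{i\})}{\tilde{\Sigma}(\{i\})},
\end{eqnarray*}
and substituting the three scalars above gives
\begin{eqnarray*}
\lambda_{i} \;=\; -\,\frac{2\,\langle \mu,(Q^{\rs{t}})^{(i)}\rangle/Q_{ii}}{\langle (Q^{\rs{t}})^{(i)},\Sigma(Q^{\rs{t}})^{(i)}\rangle/Q_{ii}^{2}} \;=\; Q_{ii}\,\Delta_{i},
\end{eqnarray*}
which is \eq{lambdai}.

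The only remaining point is the strict inequality $\Delta_{i}>0$ in \eq{Deltai}. Positive definiteness of $\Sigma$ gives $\langle (Q^{\rs{t}})^{(i)},\Sigma(Q^{\rs{t}})^{(i)}\rangle>0$ since $(Q^{\rs{t}})^{(i)}\ne 0$ (because $Q$ is invertible). The sign of the numerator follows from the stability of the one-dimensional marginal SRBM: condition \eq{stability} applied to the reduced data forces $\tilde{R}(\{i\})^{-1}\tilde{\mu}(\{i\})=(Q\mu)_{i}<0$, i.e.\ $\langle \mu,(Q^{\rs{t}})^{(i)}\rangle<0$, so $\Delta_{i}>0$. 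There is no real obstacle here; this is entirely a bookkeeping corollary of \thr{marginals} together with the one-dimensional base case, and the only point requiring a moment of care is matching the notational convention that $(Q^{\rs{t}})^{(i)}$ denotes the $i$th column of $Q^{\rs{t}}$ (equivalently the $i$th row of $Q$ written as a column).
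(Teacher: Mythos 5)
Your proposal is correct and follows the same overall architecture as the paper's proof: apply \thr{marginals} with $K=\{i\}$ to reduce to a one-dimensional SRBM, then invoke the classical fact that a stable one-dimensional RBM has exponential stationary law with rate $-2\tilde\mu(\{i\})/\tilde\Sigma(\{i\})$. The only difference is in how that ratio is evaluated: the paper routes the computation through the geometric object $\theta^{(i,\ray)}$ of \lem{ray}, writing $Q_{ii}=\Delta_i^{-1}\theta^{(i,\ray)}_i$ and using $\gamma(\theta^{(i,\ray)})=0$ to replace $\langle\theta^{(i,\ray)},\Sigma\theta^{(i,\ray)}\rangle$ by $-2\langle\theta^{(i,\ray)},\mu\rangle$, thereby obtaining the identity $\lambda_i=\theta^{(i,\ray)}_i$ that underpins the geometric interpretation in \rem{marginal 1}; your direct substitution of \eq{SigmaU}--\eq{RU} is more elementary and gets $\lambda_i=\Delta_iQ_{ii}$ without that detour, at the cost of not exhibiting \eq{thataray}. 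Two small points: your displayed intermediate expression for the one-dimensional rate carries spurious factors of $\tilde R(\{i\})$ (in one dimension \eq{alpha} reads $-2\sigma^{-2}r\,r^{-1}m=-2m/\sigma^2$, so the reflection coefficient cancels cleanly rather than appearing three times), though your final formula is right; and the positivity $(Q\mu)_i<0$ can be read off directly from \eq{stability} for the original SRBM, without appealing to stability of the reduced one-dimensional process, although your route is also valid.
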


\begin{remark}
\label{rem:marginal 1}
The parameter $\lambda_i$ in (\ref{eq:lambdai}) has a geometric
interpretation; see (\ref{eq:thataray}) in the proof of \cor{marginal 1}.
Note that $\lambda_i$  uses information on covariance $\Sigma_{ij}$
in general, so it may be different from $\alpha_{i}$ of \eq{alpha},
although we must have $\alpha_{i} = \lambda_i$ if the
stationary distribution has a product form. This fact can
  quickly be checked by using Lemma 2 in \cite{DaiMiyaWu2014a}, which characterizes the skew symmetric condition in terms of moment generating functions (see \app{corollary 1} for details). Since the exponential
distribution in \cor{marginal 1} is obtained under the weaker
condition than the product form condition \eq{skew symmetric} for $d
\ge 3$, it is intuitively clear that one should use $\lambda_{i}$ instead of
$\alpha_{i}$ in the product form approximation of a stationary
distribution.  We will  further discuss this issue in
\sectn{concluding}. 
\end{remark}

In general, (a) and (b) of \thr{marginals} are necessary but not sufficient for
$Z^{K}(0)$ and $Z^{L}(0)$ to be independent under the stationary
distribution $\pi$. For example, if $J = \{1,2\}$, then the marginal
exponential distributions are determined by the mean $1/\lambda_i$ for
$i=1,2$ by \cor{marginal 1}, but these marginals are not sufficient
for the skew symmetric condition, which is equivalent to that
$Z^{\{1\}}(0)$ and $Z^{\{2\}}(0)$ are independent. This is because
a condition weaker than the decomposability condition is used in the
proof of 
\thr{marginals} and 
therefore of \cor{marginal 1}. This fact will be detailed in
\sectn{concluding}.  Thus, it would be interesting to seek additional conditions which imply the decomposability. However, to
identify these extra conditions is
generally a hard problem, so we consider a relatively simple
situation. For this, we consider SRBMs arising from queueing networks
that have two sets of stations with feed-forward routing between these two
sets. 

\begin{theorem}
\label{thr:decomposition}
  Assume that $R$ is completely-${\cal S}$ and that the
  $d$-dimensional SRBM $Z=\{Z(t), t\ge 0\}$ has a stationary
  distribution $\pi$. Let $K$ and $L$ be a non-empty partition of $J$, and assume that
\begin{eqnarray}
\label{eq:forward condition}
  R^{(K,L)} = 0.
\end{eqnarray}
If $Z^{K}(0)$ and $Z^{L}(0)$ are independent under $\pi$ and if $Z^{K}(0)$ is of product form under $\pi$, then 
\begin{eqnarray}
\label{eq:condition1}
 \lefteqn{2\Sigma^{(K,K)} = R^{(K,K)}\diag(R^{(K,K)})^{-1}\diag(\Sigma^{(K,K)})} \nonumber\\
&& \hspace{10ex} + \diag(\Sigma^{(K,K)})\diag(R^{(K,K)})^{-1}(R^{(K,K)})^{\rs{t}}, \\
\label{eq:condition2}
 \lefteqn{2\Sigma^{(L,K)} = R^{(L,K)}\diag(\Sigma^{(K,K)})\diag(R^{(K,K)})^{-1}.}
\end{eqnarray}
Conversely, if $\Sigma$ and $R$ satisfy (\ref{eq:condition1}) and (\ref{eq:condition2}),
  and if the $\abs{L}$-dimensional $(\Sigma^{(L,L)}, \tilde{\mu}(L),
  R^{(L,L)})$-SRBM has a stationary distribution, then $Z^K(0)$ and
  $Z^L(0)$ are independent under $\pi$ and $Z^K(0)$ is of product form
  under $\pi$.
\end{theorem}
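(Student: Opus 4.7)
The plan is to work with the moment generating function (MGF) form of the basic adjoint relationship (BAR) for a stationary SRBM,
\[
\gamma(\theta)\varphi(\theta) + \sum_{i=1}^{d} [R^{\rs{t}}\theta]_i\,\varphi_i(\theta) = 0,\qquad \gamma(\theta) := \br{\mu,\theta}+\tfrac12\br{\theta,\Sigma\theta},
\]
where $\varphi(\theta)=\E_\pi[\exp\br{\theta,Z(0)}]$ and $\varphi_i$ is the MGF of the boundary measure on face $F_i=\{z:z_i=0\}$. The structural hypothesis $R^{(K,L)}=0$ forces $Q^{(K,L)}=0$, so \eq{SigmaU}--\eq{RU} collapse to $\tilde R(K)=R^{(K,K)}$, $\tilde\mu(K)=\mu^K$, and $\tilde\Sigma(K)=\Sigma^{(K,K)}$. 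This block simplification is the backbone of both directions.

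For the forward direction, first apply \thr{marginals} to the block $K$: the $K$-marginal of $\pi$ is the stationary distribution of the $|K|$-dimensional $(\Sigma^{(K,K)},\mu^K,R^{(K,K)})$-SRBM. Combined with the product-form hypothesis on $Z^K(0)$ and Harrison--Williams \eq{skew symmetric} this gives \eq{condition1} at once. To derive \eq{condition2}, substitute the factorizations $\varphi(\theta)=\varphi^K(\theta^K)\varphi^L(\theta^L)$ and, for $i\in K$, $\varphi_i(\theta)=\varphi^K_i(\theta^K)\varphi^L(\theta^L)$ (where $\varphi^K_i$ is the $i$th boundary MGF of the $K$-marginal SRBM), and analogously for $i\in L$, into the full BAR. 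Dividing by $\varphi(\theta)$ and subtracting the BAR of the $K$-marginal SRBM, the terms depending only on $\theta^K$ cancel, so the residue must depend only on $\theta^L$. The product form supplies $\varphi^K_i(\theta^K)/\varphi^K(\theta^K)=c_i(1-\theta_i/\lambda_i)$ with $c_i/\lambda_i=\Sigma^{(K,K)}_{ii}/(2R^{(K,K)}_{ii})$ by Harrison--Williams, and matching the coefficient of the bilinear monomial $\theta_i\theta_j$ ($i\in K$, $j\in L$) produces $\Sigma^{(L,K)}_{ji}=R^{(L,K)}_{ji}\Sigma^{(K,K)}_{ii}/(2R^{(K,K)}_{ii})$, i.e.\ \eq{condition2}.

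For the converse, a short block-matrix computation shows that \eq{condition1} and \eq{condition2} together imply $\tilde\Sigma(L)=\Sigma^{(L,L)}$: expanding $\tilde\Sigma(L)$ via $\tilde R(L)=R^{(L,L)}$ and $Q^{(L,K)}=-(R^{(L,L)})^{-1}R^{(L,K)}(R^{(K,K)})^{-1}$, the correction to $\Sigma^{(L,L)}$ reduces to a commutator of the two diagonal matrices $\diag(\Sigma^{(K,K)})$ and $\diag(R^{(K,K)})^{-1}$, which vanishes. Hence the $|L|$-dimensional SRBM assumed to have a stationary distribution $\hat\pi^L$ is exactly the marginal SRBM produced by \thr{marginals}. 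Let $\hat\pi^K$ be the Harrison--Williams product-form stationary distribution of the $|K|$-dimensional $(\Sigma^{(K,K)},\mu^K,R^{(K,K)})$-SRBM; existence follows from \eq{condition1} (skew symmetry) together with the stability $(R^{(K,K)})^{-1}\mu^K=(R^{-1}\mu)^K<0$, which is read off from $Q^{(K,L)}=0$. Form $\hat\pi:=\hat\pi^K\otimes\hat\pi^L$ with product boundary measures $\hat\nu^K_i\otimes\hat\pi^L$ for $i\in K$ and $\hat\pi^K\otimes\hat\nu^L_i$ for $i\in L$, and verify that $\hat\pi$ satisfies the full BAR. This verification is the forward argument run in reverse: the $K$-reflection contributions $\sum_{i\in K}[(R^{(L,K)})^{\rs{t}}\theta^L]_i\,\varphi^K_i/\varphi^K$ absorb the generator cross-term $\br{\theta^K,\Sigma^{(K,L)}\theta^L}$ precisely because of \eq{condition2}, and the remaining purely-$\theta^L$ equation is the BAR of the $L$-marginal SRBM thanks to $\tilde\Sigma(L)=\Sigma^{(L,L)}$. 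Uniqueness of the stationary distribution then gives $\pi=\hat\pi$, so $Z^K(0),Z^L(0)$ are independent and $Z^K(0)$ is of product form.

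The hard part will be the boundary-MGF factorization invoked in both directions: showing in the forward direction that independence of $Z^K(0),Z^L(0)$ together with $R^{(K,L)}=0$ forces $\nu_i$ on $F_i$ for $i\in K$ to split as the $K$-marginal boundary measure tensored with $\hat\pi^L$, and showing in the converse that the product measures chosen above are indeed the boundary measures arising from the BAR for $\hat\pi$. Intuitively, since $R^{(K,L)}=0$ means the $K$-reflections do not displace $Z^L$, the local time on any $K$-face sees $Z^L$ in its stationary law, which yields the factorization. Making this rigorous will likely rely on the BAR characterization developed in \sectn{characterizing}; granting it, both halves of the theorem reduce to the bilinear coefficient match sketched above.
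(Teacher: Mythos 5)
Your proposal is correct and follows essentially the same route as the paper: use \thr{marginals} and the block structure $Q^{(K,L)}=0$ to identify the marginal SRBMs, substitute the factorized $\varphi$ and $\varphi_i$ into the BAR and match polynomial coefficients for the forward direction, then for the converse build the product measure, check the BAR, and invoke uniqueness via part (b) of \lem{key 1}. The boundary-measure factorization you flag as the remaining ``hard part'' is exactly \lem{partial independence} of \sectn{characterizing}, so it is already available and no gap remains.
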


\begin{remark}
\label{rem:decomposition 1}
  After an appropriate recording of the coordinates, the condition \eq{forward condition} can be written as
\begin{eqnarray*}
  R= \begin{pmatrix}
    R^{(K,K)} & 0 \\
   R^{(L,K)} & R^{(L,L)}
  \end{pmatrix}.
\end{eqnarray*}
  In this case, the covariance matrix $\Sigma$ and the drift vector $\mu$ are partitioned as
\begin{eqnarray*}
  \Sigma =
\begin{pmatrix}
  \Sigma^{(K,K)} & (\Sigma^{(L,K)})^{\rs{t}} \\ 
 \Sigma^{(L,K)} & \Sigma^{(L,L)}\\
\end{pmatrix}, \qquad \mu= (\mu^K, \mu^{L})^{\rs{t}}.
\end{eqnarray*}
Note that \eq{condition1} is the skew symmetric condition for
  $(\Sigma^{(K,K)}, \mu^{K}, R^{(K,K)})$-SRBM. A geometric
  interpretation for this condition is given in Theorem 1 and its
  corollaries in \cite{DaiMiyaWu2014a}. One wonders if a
  similar interpretation can be found
for \eq{condition2}; so far we have not been able to obtain one.  
\end{remark}

This theorem is proved in \sectn{theorem 2}. The next corollary is for an SRBM arising from the $d$ station tandem queue, which was discussed in \sectn{introduction} (see \fig{dd-tandem}). We omit its proof because it is an immediate consequence of \thr{decomposition}.

\begin{corollary}
\label{cor:decomposition 1}
Assume that the $(\Sigma, \mu, R)$-SRBM has a stationary distribution
$\pi$, where  the reflection matrix
$R$, the covariance 
matrix $\Sigma$, and the drift vector $\mu$ are given by \eq{tandem
  R}, \eq{tandem Sigma} and \eq{tandem mu}, respectively.
 For each positive integer $k \le
d-1$, set $K=\{1,\cdots,k\}$ and $L=J\setminus K$. Then $Z^{K}(0)$ and
$Z^{L}(0)$ are independent under $\pi$ if $c_0=c_1=\cdots=c_k$. Furthermore,
for $k=1$, $c_{0} = c_{1}$ is also necessary for this
decomposability. 
\end{corollary}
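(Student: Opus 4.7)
The plan is to reduce the corollary to verifying the hypotheses of \thr{decomposition} for the tandem primitives \eq{tandem R}-\eq{tandem mu}. First I would check the block-triangular condition \eq{forward condition}: the only nonzero entries of $R$ are $R_{ii}=1$ and $R_{i,i-1}=-1$, neither of which can lie in rows $i\le k$ and columns $j\ge k+1$, so $R^{(K,L)}=0$ is automatic and both directions of \thr{decomposition} become available.

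For sufficiency under $c_0=c_1=\cdots=c_k\equiv c$, I would verify the skew-symmetry identities \eq{condition1} and \eq{condition2} directly. Under equal CVs every diagonal entry of $\Sigma^{(K,K)}$ equals $\beta_0(c_{i-1}^2+c_i^2)=2\beta_0 c^2$, so $\diag(\Sigma^{(K,K)})=2\beta_0 c^2 I_k$; combined with $\diag(R^{(K,K)})=I_k$, the right-hand side of \eq{condition1} collapses to $2\beta_0 c^2(R^{(K,K)}+(R^{(K,K)})^{\rs{t}})$, which matches $2\Sigma^{(K,K)}$ termwise. For \eq{condition2}, both $\Sigma^{(L,K)}$ and $R^{(L,K)}$ are supported only at the $(k+1,k)$-entry, equal to $-\beta_0 c^2$ and $-1$ respectively, so the identity reduces to $-2\beta_0 c^2=(-1)\cdot 2\beta_0 c^2$. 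The converse direction of \thr{decomposition} additionally requires the $\abs{L}$-dimensional $(\Sigma^{(L,L)},\tilde{\mu}(L),R^{(L,L)})$-SRBM to have a stationary distribution; since $R^{(K,L)}=0$, block inversion gives $(Q^{(L,L)})^{-1}=R^{(L,L)}$, and an explicit evaluation of \eq{muU} using $Q_{ij}=\mathbf{1}\{i\ge j\}$ yields $\tilde{\mu}(L)=(\beta_0-\beta_{k+1},\beta_{k+1}-\beta_{k+2},\ldots,\beta_{d-1}-\beta_d)^{\rs{t}}$, whose telescoping sums give $(R^{(L,L)})^{-1}\tilde{\mu}(L)$ with $i$th entry $\beta_0-\beta_{k+i}<0$; because $R^{(L,L)}$ is an $\sr{M}$-matrix, existence of the sub-SRBM's stationary distribution follows from \cite{HarrWill1987}.

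For necessity at $k=1$, the singleton $K=\{1\}$ makes $Z^{K}(0)$ trivially of product form, so the forward direction of \thr{decomposition} applies as soon as $Z^{\{1\}}(0)$ and $Z^{L}(0)$ are independent. Condition \eq{condition1} then holds trivially (both sides equal $2\Sigma_{11}$), while \eq{condition2} has only the $(2,1)$-entry to verify: the left-hand side is $-2\beta_0 c_1^2$ and the right-hand side is $(-1)\cdot\beta_0(c_0^2+c_1^2)$, forcing $c_0^2=c_1^2$ and hence $c_0=c_1$. The main obstacle in the whole argument is less any one calculation than keeping the submatrix indexing straight while exploiting the block-inversion identity $(Q^{(L,L)})^{-1}=R^{(L,L)}$ that makes the downstream sub-SRBM itself of tandem form with external arrival rate $\beta_0$; once that identification is in hand, every remaining step is an entry-by-entry check against the bidiagonal/tridiagonal sparsity of the tandem $R$ and $\Sigma$.
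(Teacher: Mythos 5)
Your proposal is correct and follows exactly the route the paper intends: the paper omits the proof of this corollary precisely because it is the entry-by-entry verification of the hypotheses of Theorem~\ref{thr:decomposition} that you carry out ($R^{(K,L)}=0$, conditions \eq{condition1}--\eq{condition2} under equal CVs, stability of the downstream $(\Sigma^{(L,L)},\tilde{\mu}(L),R^{(L,L)})$-SRBM via the $\sr{M}$-matrix criterion, and the $(2,1)$-entry of \eq{condition2} for necessity when $k=1$). All the computations, including $\tilde{\mu}(L)$ and the identity $(Q^{(L,L)})^{-1}=R^{(L,L)}$ from block inversion, check out.
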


\section{Proofs of main results}
\label{sect:proofs}

We will prove Theorems \thrt{marginals} and \thrt{decomposition}. For this, we first discuss about equations to characterize the stationary distribution and some basic facts obtained from the decomposability.

\subsection{The stationary distribution}
\label{sect:characterizing}

Assume the SRBM has a stationary distribution. The stationary
distribution must be unique \cite{DaiHarr1992}.
Our first tool is the basic
adjoint relationship (BAR) that characterizes
the stationary distribution.  For this, we first introduce
the boundary measures for a distribution $\pi$ on $(\dd{R}_{+}^{d},
\sr{B}(\dd{R}_{+}^{d}))$, where $\sr{B}(\dd{R}_{+}^{d})$ is the Borel
$\sigma$-field on $\dd{R}_{+}^{d}$. They are defined as
\begin{eqnarray*}
  \nu_{i}(B)=\dd{E}_{\pi} \left[ \int_0^1  1\{ Z(t) \in B\} dY_{i}(t)\right], \quad B \in \sr{B}(\dd{R}_{+}^{d}), \; i\in J.
\end{eqnarray*}
Our BAR is  in terms of moment
generating functions, which  are defined as  
\begin{eqnarray*}
 && \varphi(\theta) = \dd{E}_{\pi}[ e^{\langle \theta, Z(0)\rangle}],\\
 && \varphi_{i}(\theta) = \dd{E}_{\pi} \left[ \int_0^1  e^{\langle
      \theta, Z(t)\rangle}dY_{i}(t)\right], \quad i\in J,
\end{eqnarray*}
where $\dd{E}_\pi$ is the expectation operator when $Z(0)$ has the distribution $\pi$.

Because for each $i\in J$, $Y_{i}(t)$ increases only when $Z_{i}(t)=0$, one has $\varphi_{i}(\theta)$ depends on
$\theta^{J\setminus\{i\}}$ only. Therefore,  
\begin{displaymath}
\varphi_{i}(\theta)=\varphi_{i}\bigl(\lift{\theta^{J\setminus\{i\}}}\bigr).
\end{displaymath}
Note that $Y_{i}(t)$ and $Y_{j}(t)$ may simultaneously increase for $j \ne i$ but it occurs with probability 0 by Lemma 4.5 of \cite{DaiWill1995}. So, we do not need to care about the other $\theta_{j}$'s in the variable $\theta$ of $\varphi_{i}(\theta)$.

For a $(\Sigma, \mu, R)$-SRBM, its data can be alternatively described
in terms of 
$d$-dimensional polynomials, which are defined as
\begin{eqnarray*}
 && \gamma(\theta)=-\frac{1}{2}\langle\theta, \Sigma\theta\rangle-\br{\mu, \theta}, \qquad \theta \in \dd{R}^{d},\\
 && \gamma_{i}(\theta) = \brb{R^{(i)}, \theta}, \qquad \theta \in \dd{R}^{d}, \quad i \in J,
\end{eqnarray*}
where $R^{(i)}$ is the $i$th column of the reflection matrix $R$. Obviously, those polynomials uniquely determine the primitive data, $\Sigma$, $\mu$ and $R$. Thus, we can use those polynomials to discuss everything about the SRBM instead of the primitive data themselves.

The following lemma is critical in our analysis. Equation \eq{key 1} below is the moment 
generating function version of the standard basic adjoint
relationship. We still refer to it
as BAR.

\begin{lemma}
\label{lem:key 1}
(a) Assume $\pi$ is the stationary distribution of a $(\Sigma, \mu,
R)$-SRBM.  For $\theta\in \dd{R}^d$, $\varphi(\theta)<\infty$ implies
$\varphi_i(\theta)<\infty$ for $i\in J$. Furthermore,
\begin{eqnarray}
    \label{eq:key 1}
    \gamma(\theta) \varphi(\theta)= \sum_{i=1}^d \gamma_{i}(\theta)
    \varphi_{i}\bigl(\theta\bigr)
\end{eqnarray}
holds for $\theta \in \dd{R}^{d}$ such that $\varphi(\theta) < \infty$.
(b) Assume that $\pi$ is a probability measure on $\dd{R}^d_+$ and that
$\nu_i$ is a positive finite measure whose support is contained in
$\{x\in\dd{R}^d_+: x_i=0\}$ for $i\in J$. Let $\varphi$ and $\varphi_i$
be the moment generating functions of $\pi$ and $\nu_i$,
respectively. If $\varphi$, 
$\varphi_1$, $\ldots$, $\varphi_d$ satisfy (\ref{eq:key 1}) for each $\theta\in
\dd{R}^d$ with $\theta\le 0$, then $\pi$ is the stationary distribution
and $\nu_i$ is the corresponding boundary measure on $\{x\in\dd{R}^d_+: x_i=0\}$.
\end{lemma}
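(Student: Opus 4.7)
The plan is to prove (a) by applying Itô's formula to $f(x)=e^{\br{\theta,x}}$ along the SRBM $Z$, and to prove (b) by reading off the classical generator-form BAR from its moment-generating-function version and invoking existing martingale-problem uniqueness.

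For (a), first fix $\theta\le 0$ so that $f\le 1$ on $\dd{R}_+^d$. Writing $X(t)=X(0)+\mu t+\sigma B(t)$ with $\sigma\sigma^{\rs{t}}=\Sigma$ and using $Z(t)=Z(0)+X(t)+RY(t)$, Itô's formula gives
\begin{equation*}
f(Z(1))-f(Z(0)) = -\gamma(\theta)\int_0^1 f(Z(t))\,dt + \sum_{i=1}^d \gamma_i(\theta)\int_0^1 f(Z(t))\,dY_i(t) + M(1),
\end{equation*}
where $M$ is the local martingale arising from the Brownian integral. Boundedness of $f$ makes $M$ a true mean-zero martingale and every Stieltjes integral integrable, so taking expectation under $\pi$ annihilates the left-hand side by stationarity and yields \eq{key 1} together with $\varphi_i(\theta)<\infty$ for $\theta\le 0$. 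To extend to arbitrary $\theta$ with $\varphi(\theta)<\infty$, I stop at $\tau_n=\inf\{t\ge 0:|Z(t)|\ge n\}$, apply Itô on $[0,1\wedge\tau_n]$ where everything is bounded, take expectation, and let $n\to\infty$; stationarity of $Z$ combined with $\varphi(\theta)<\infty$ controls the $dt$-integral uniformly in $n$, and the nonnegativity of each $\int_0^{1\wedge\tau_n} f(Z(t))\,dY_i(t)$ allows a one-sided bound that forces $\varphi_i(\theta)<\infty$ via monotone convergence. Dominated convergence then delivers \eq{key 1} in full generality.

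For (b), I exploit the fact that the exponentials $\{e^{\br{\theta,\cdot}}:\theta\le 0\}$ are multiplicatively closed and point-separating on $\dd{R}_+^d$, and that Laplace transforms on the closed negative orthant uniquely determine finite nonnegative measures, so $\varphi$ and $\varphi_i$ determine $\pi$ and $\nu_i$. Specializing \eq{key 1} to these test functions recasts it as the classical stationary identity
\begin{equation*}
\int \left( \tfrac{1}{2} \sum_{i,j}\Sigma_{ij}\partial_i\partial_j f + \br{\mu,\nabla f} \right) d\pi + \sum_{i=1}^d \int \br{R^{(i)},\nabla f}\,d\nu_i = 0,
\end{equation*}
which extends by Stone--Weierstrass / monotone-class arguments to a separating algebra of $C_b^2$ test functions. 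This is precisely the generator-form BAR known to characterize the stationary distribution and boundary measures of the $(\Sigma,\mu,R)$-SRBM via an Echeverria--Weiss style martingale-problem argument, and combined with the uniqueness of the stationary distribution \cite{DaiHarr1992} it forces $\pi$ and the $\nu_i$ to coincide with the stationary distribution and boundary measures, respectively.

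The main obstacle is the integrability upgrade in (a): showing $\varphi_i(\theta)<\infty$ whenever $\varphi(\theta)<\infty$, without which the right-hand side of \eq{key 1} is not even well-defined for $\theta$ outside the negative orthant. The key ingredient is the sign structure of the stopped Itô identity---the Stieltjes integrals against $dY_i$ are intrinsically nonnegative and hence amenable to one-sided control by the $dt$-term, which in turn is dominated by $\varphi(\theta)$ via stationarity. The extension in (b) from exponentials to a rich algebra of test functions should be routine given the injectivity of Laplace transforms and the algebraic structure of the exponential family.
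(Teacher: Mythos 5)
Your overall architecture mirrors the paper's, which itself largely defers to references: It\^o's formula applied to $e^{\langle\theta,x\rangle}$ for part (a), and an upgrade from exponential test functions to the generator-form BAR followed by an appeal to \cite{DaiKurt1994} for part (b). Your argument for (a) in the case $\theta\le 0$ is correct and standard. The genuine difficulty in (a), however, is the extension to $\theta$ with positive components, and your localization sketch has two concrete gaps there. First, the stopped identity retains the term $\dd{E}_\pi\bigl[e^{\langle\theta,Z(1\wedge\tau_n)\rangle}\bigr]$, and on $\{\tau_n\le 1\}$ this integrand can be of order $e^{cn}$ with $c=\max_i\theta_i^+>0$; to remove it you would need $e^{cn}\,\Prob_\pi\bigl(\sup_{t\le 1}|Z(t)|\ge n\bigr)\to 0$, an oscillation estimate for the stationary SRBM that does not follow from $\varphi(\theta)<\infty$ alone and is nowhere supplied. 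Second, your ``one-sided bound'' cannot separate the boundary terms: the coefficients $\gamma_i(\theta)=\langle R^{(i)},\theta\rangle$ carry no common sign once $\theta\not\le 0$ (and the columns of a completely-$\sr{S}$ matrix generically have entries of both signs), so control of the signed sum $\sum_i\gamma_i(\theta)\int_0^{1\wedge\tau_n}f\,dY_i$ does not yield finiteness of each individual $\varphi_i(\theta)$ --- which is precisely the assertion to be proved. This is why the paper points to the proof of Lemma~4.1(a) of \cite{DaiMiya2011}, where the truncation is performed on the test function itself (smoothly capping $e^{\langle\theta,x\rangle}$ so every term stays bounded and the limits are taken with controlled signs), rather than by stopping the process.

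For (b), your appeal to Stone--Weierstrass gives uniform approximation of $f$, but the generator-form BAR involves $\nabla f$ and the second derivatives of $f$ integrated against $\pi$ and the $\nu_i$; uniform approximation of $f$ controls neither. The paper's route is different precisely to handle this: it continues \eq{key 1} analytically to complex arguments with nonpositive real parts, thereby obtaining the BAR for finite Fourier transforms, whose derivatives are automatically controlled, and then approximates compactly supported $C^2$ functions by such transforms (with the complete argument deferred to the appendix of the arXiv version of \cite{DaiMiyaWu2014a}). Your Laplace-uniqueness remark is fine for identifying $\pi$ and $\nu_i$ once stationarity is established, but the substance of (b) is the converse implication, and the passage from decaying exponentials to a test-function class admissible in \cite{DaiKurt1994} is exactly the step that requires derivative control and is missing from your sketch.
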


For (a), the fact that (\ref{eq:key 1}) holds for $\theta\le 0$ is a
special case of the standard basic adjoint relationship (BAR); see,
e.g., equation (7) in \cite{DaiHarr1992}. When some components
of $\theta$ are allowed to be positive in (\ref{eq:key 1}), readers
are referred to the proof of Lemma 4.1 (a) in \cite{DaiMiya2011} to
see how to rigorously derive the relationship. For (b), we refer to
Theorem 1.2 of \cite{DaiKurt1994} (see also \cite{KangRama2012} for a
more general class of reflecting processes). In \cite{DaiKurt1994},
BAR \eq{key 1} is given using differential operators; see, for
  example, again  equation (7) in \cite{DaiHarr1992}.
Under the condition of our lemma, that BAR (7) is satisfied for all
functions  $f$ of the form
\begin{equation}
  \label{eq:2}
  f(x)= e^{\langle \theta, x\rangle} \qquad
  x\in \dd{R}^d_+, \mbox{ for each } \theta \in \dd{R}^d \mbox{ with } \theta \le 0.
\end{equation}
By the analytic extension, BAR (7) in \cite{DaiHarr1992}  continues to
hold for functions 
$f$ when $\theta$ is replaced by $(z_1, \ldots, z_d)$ 
where each $z_i$ is a complex variable with $\Re z_i\le 0$.
This means that BAR is satisfied for finite Fourier
transforms.  Since a continuous
function with a compact support is uniformly approximated by a
sequence of finite Fourier transforms, one can argue that BAR (7) in
\cite{DaiHarr1992} 
holds for all bounded functions whose first- and second- order
derivatives are continuous and bounded. A complete proof for the
equivalence of \eq{key 1} to BAR (7) in \cite{DaiHarr1992}  can be found in the appendix of the  arXiv version of \cite{DaiMiyaWu2014a}.

In the rest of this paper, whenever we write $\varphi(\theta)$, we implicitly
assume it is finite.

Let $K$ and $L$ be a non-empty partition of $J$. In this paper, we
consider conditions for $Z^{K}(0)$ to be independent of $Z^{L}(0)$
under the stationary distribution $\pi$. The independence is equivalent to
\begin{eqnarray}
\label{eq:decomposition 1}
  \varphi(\theta) = \varphi^{K}(\theta^{K}) \varphi^{L}(\theta^{L}),
\end{eqnarray}
where, for  $U \subset J$,
\begin{eqnarray*}
  \varphi^{U}(\theta^{U}) = \varphi(\lift{\theta^{U}}).
\end{eqnarray*}

The next lemma shows how the boundary measure is decomposed under
\eq{decomposition 1}. 
\begin{lemma}
\label{lem:partial independence}
Let $K$ and $L$ be a non-empty partition of $J$.
Assume that $Z^K(0)$ and $Z^L(0)$ are independent under the stationary
distribution $\pi$. Assume $\varphi(\theta)<\infty$.
 Then
\begin{eqnarray}
\label{eq:Palm 1}
  \varphi_{j}(\theta) = \varphi_{j}^{K}(\theta^{{K}}) \varphi^{L}(\theta^{L}), \qquad j \in K,
\end{eqnarray}
where, for $ U \subset J$,
\begin{eqnarray*}
  \varphi^{U}_{j}(\theta^{U}) = \varphi_{j}(\lift{\theta^{U}}).
\end{eqnarray*}
\end{lemma}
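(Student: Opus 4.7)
My strategy is to exploit the uniqueness characterization of the stationary distribution and boundary measures via BAR given in part (b) of \lem{key 1}, combined with the independence factorization $\varphi(\theta)=\varphi^{K}(\theta^{K})\varphi^{L}(\theta^{L})$ implied by the hypothesis.

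For each $j\in K$, introduce the candidate boundary measure $\hat{\nu}_{j}:=\nu_{j}^{K}\otimes\pi^{L}$, where $\nu_{j}^{K}$ is the $K$-marginal of the true $\nu_{j}$ and $\pi^{L}$ is the $L$-marginal of $\pi$; symmetrically, for $l\in L$, define $\hat{\nu}_{l}:=\pi^{K}\otimes\nu_{l}^{L}$. The measure $\hat{\nu}_{j}$ inherits from $\nu_{j}^{K}$ its support in $\{z:z_{j}=0\}$, and its moment generating function is $\varphi_{j}^{K}(\theta^{K})\varphi^{L}(\theta^{L})$; symmetrically, $\hat{\nu}_{l}$ has MGF $\varphi^{K}(\theta^{K})\varphi_{l}^{L}(\theta^{L})$. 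These support and finiteness properties match the hypotheses of \lem{key 1}(b).

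The crucial step is to verify that the pair $(\pi,\{\hat{\nu}_{i}\}_{i\in J})$ satisfies the BAR identity \eq{key 1} for every $\theta\le 0$, that is,
\begin{equation*}
  \gamma(\theta)\varphi^{K}(\theta^{K})\varphi^{L}(\theta^{L})
  = \sum_{j\in K}\gamma_{j}(\theta)\varphi_{j}^{K}(\theta^{K})\varphi^{L}(\theta^{L})
  + \sum_{l\in L}\gamma_{l}(\theta)\varphi^{K}(\theta^{K})\varphi_{l}^{L}(\theta^{L}).
\end{equation*}
Once this holds, \lem{key 1}(b) together with the uniqueness of the stationary distribution forces $\hat{\nu}_{i}=\nu_{i}$ for every $i\in J$, which is precisely \eq{Palm 1} (and its symmetric counterpart for $l\in L$).

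To derive the target identity, I would combine the full BAR at $\theta$ (after substituting $\varphi=\varphi^{K}\varphi^{L}$) with the two marginal BARs obtained by setting $\theta^{L}=0$ and $\theta^{K}=0$ in \eq{key 1}, then multiplying by $\varphi^{L}(\theta^{L})$ and $\varphi^{K}(\theta^{K})$ respectively. Using the decompositions $\gamma(\theta)=\gamma^{K}(\theta^{K})+\gamma^{L}(\theta^{L})-\br{\theta^{K},\Sigma^{(K,L)}\theta^{L}}$ and $\gamma_{i}(\theta)=\br{\theta^{K},R^{(K,i)}}+\br{\theta^{L},R^{(L,i)}}$, the ``diagonal'' parts can be matched. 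The main obstacle, which will be the crux of the proof, is handling the cross-terms --- the off-diagonal quadratic contribution $\br{\theta^{K},\Sigma^{(K,L)}\theta^{L}}$ in $\gamma(\theta)$ and the off-block linear parts of each $\gamma_{i}$ --- since these are not balanced by the two marginal BARs alone. Closing the argument will likely require either a polynomial-matching step (comparing coefficients of $\theta^{L}$ for fixed $\theta^{K}$) or an auxiliary identity obtained by applying It\^o's formula to a product test function $e^{\br{\theta^{K},z^{K}}}e^{\br{\theta^{L},z^{L}}}$ on $Z$ and exploiting the vanishing of all joint mixed cumulants of $Z^{K}$ and $Z^{L}$ under $\pi$.
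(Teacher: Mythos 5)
There is a genuine gap. Your reduction is sound in outline: if the pair $(\pi,\{\hat{\nu}_i\})$ with $\hat{\nu}_j=\nu_j^K\otimes\pi^L$ ($j\in K$) and $\hat{\nu}_l=\pi^K\otimes\nu_l^L$ ($l\in L$) could be shown to satisfy \eq{key 1}, then \lem{key 1}(b) plus uniqueness would give \eq{Palm 1}. But the verification of that candidate BAR is not a technical detail to be deferred --- it is equivalent to the lemma itself. Subtracting the true BAR $\gamma\varphi=\sum_i\gamma_i\varphi_i$ (with $\varphi=\varphi^K\varphi^L$) from your target identity, what you must show is exactly $\sum_{i}\gamma_i(\theta)\bigl[\varphi_i(\theta)-\hat{\varphi}_i(\theta)\bigr]=0$, i.e.\ that each bracket vanishes, which is the conclusion. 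Your proposed inputs do not break this circularity: the marginal BAR at $\theta=\lift{\theta^K}$ reads $\gamma(\lift{\theta^K})\varphi^K(\theta^K)=\sum_{i\in J}\gamma_i(\lift{\theta^K})\varphi_i^K(\theta^K)$, and the sum runs over \emph{all} $i\in J$ (since $\gamma_l(\lift{\theta^K})=\br{(R^{(l)})^K,\theta^K}$ does not vanish for $l\in L$), so it entangles the $K$-marginals of the boundary measures indexed by $L$ rather than isolating the ``diagonal'' parts. Coefficient matching in $\theta$ fails because the $\varphi_i$ are unknown analytic functions, not polynomials, and the vanishing of mixed cumulants under $\pi$ says nothing about the boundary measures $\nu_i$, which are the objects whose factorization is at issue.

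The missing idea --- and the one the paper uses --- is a device that isolates a \emph{single} $\varphi_j$ inside the BAR. Since $Y_i$ and $Y_j$ do not increase simultaneously (Lemma 4.5 of \cite{DaiWill1995}), one has $\lim_{\theta_j\downarrow-\infty}\varphi_i(\theta)=0$ for $i\ne j$; dividing \eq{key 1} by $\theta_j$ and letting $\theta_j\downarrow-\infty$ then kills every term except the $j$-th and the quadratic part of $\gamma$, yielding $R_{jj}\varphi_j(\theta)=-\lim_{\theta_j\downarrow-\infty}(\tfrac12\Sigma_{jj}\theta_j+\mu_j)\varphi(\theta)$ as in \eq{Palm 2}. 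Applying the same identity to the marginal $\lift{\theta^K}$, multiplying by $\varphi^L(\theta^L)$, and using $\varphi=\varphi^K\varphi^L$ together with $R_{jj}\ne 0$ gives \eq{Palm 1} directly, with no appeal to the converse part of \lem{key 1}. Without some such isolation mechanism your plan cannot be closed.
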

\begin{proof}
We first prove that for $i \ne j$,
\begin{equation}
  \label{eq:thetajlimit}
  \lim_{\theta_{j} \downarrow -\infty} \varphi_{i}(\theta)=0.
\end{equation}
By the monotone convergence theorem, we have
\begin{eqnarray*}
  \lim_{\theta_{j} \downarrow -\infty} \varphi_{i}(\theta) = \dd{E}_{\pi} \left[ \int_0^1  e^{\br{\theta^{J \setminus \{i,j\}}, Z^{J \setminus \{i,j\}}(t)}} 1(Z_{j}(t) = 0) dY_{i}(t)\right].
\end{eqnarray*}
By (\ref{eq:RBM4}), $Y_i(t)=\int_0^t 1(Z_{i}(s) = 0) dY_{i}(s)$ for all $t\ge
0$. Thus,
\begin{eqnarray*}
\lefteqn{\int_0^1  e^{\br{\theta^{J \setminus \{i,j\}}, Z^{J \setminus
          \{i,j\}}(t)}} 1(Z_{j}(t) = 0) dY_{i}(t)} \\
&& =
 \int_0^1  e^{\br{\theta^{J \setminus \{i,j\}}, Z^{J \setminus
          \{i,j\}}(t)}} 1(Z_{j}(t)= 0, Z_i(t)=0) dY_{i}(t),
\end{eqnarray*}
which equals to zero almost surely by Lemma 4.5 of \cite{DaiWill1995} (see also Theorem 1 of \cite{ReimWill1988}).
Therefore, we have proved (\ref{eq:thetajlimit}).
Hence, for each $j \in J$ and $\theta \le 0$, dividing both sides of \eq{key 1} by $\theta_{j}$ and letting $\theta_{j} \downarrow -\infty$, we have
\begin{eqnarray}
\label{eq:Palm 2}
 - \lim_{\theta_{j} \downarrow -\infty} \left(\frac 12 \Sigma_{jj} \theta_{j} + \mu_{j}\right) \varphi(\theta) &=& R_{jj} \varphi_{j}(\theta) + \sum_{i \ne j} R_{ji} \lim_{\theta_{j} \downarrow -\infty} \varphi_{i}(\theta) \nonumber\\
  &=& R_{jj}\varphi_{j}(\theta).
\end{eqnarray}
Let $\theta = \lift{\theta^K}$ for $j \in K$ in this equation, we have
\begin{eqnarray}
\label{eq:Palm 3}
  - \lim_{\theta_{j} \downarrow -\infty} \left(\frac 12 \Sigma_{jj} \theta_{j} + \mu_{j}\right) \varphi^{K}(\theta^{K}) = R_{jj}\varphi_{j}^{K}(\theta^{K}).
\end{eqnarray}
By the independence assumption, we can write
\begin{eqnarray*}
  \varphi(\theta) = \varphi^{K}(\theta^{K}) \varphi^{L}(\theta^{L}).
\end{eqnarray*}
Hence, multiplying $\varphi^{L}(\theta^{L})$ to both sides of \eq{Palm 3}, then \eq{Palm 2} yields \eq{Palm 1} because $R$ is an completely-$\sr{S}$ matrix and therefore $R_{jj} \ne 0$.
\end{proof}

\subsection{Proof of \thr{marginals}}
\label{sect:theorem 1}
Because the SRBM $Z$ has a stationary distribution,
(\ref{eq:stability}) is satisfied. Thus, $Q \equiv R^{-1}$ exists.
Let $V(t)=QZ(t)$ for $t\ge 0$.
It
follows from \eq{RBM1} that 
\begin{eqnarray}
\label{eq:QZ 1}
  V(t) = V(0) + Q X(t) + Y(t), \qquad t \ge 0.
\end{eqnarray}
Note that $QX$ is still Brownian motion with drift vector $Q\mu$ and
covariance matrix $Q\Sigma Q^T$. For an SRBM $Z$ arising from
open multiclass queueing networks, the process $V$ is known as 
the \emph{total} workload process \cite{HarrNguy1990}. The key idea of
our proof is to use \eq{QZ 1} instead of \eq{RBM1}, and \eq{QZ 1} allows us to
easily separate the entries of $Y(t)$ among coordinates in a 
partition $K$ and $L$. 

The most difficult part is in the proof of part  (a) of the
theorem because there is no hope to directly verify that
   $\tilde{R}(U)$ is completely-$\cal S$ as noted
  in \rem{marginals 0} immediately below the statement of the
    theorem. To verify it, we need information from the
  decomposability.
We  prove  
the first half of part (a) in the 
following lemma.

\begin{lemma} {\rm
\label{lem:invertible}
  $Q^{(K,K)}$ is invertible.
}\end{lemma}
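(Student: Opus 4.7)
The plan is to argue by contradiction. Suppose $Q^{(K,K)}$ is singular. First I would translate this into a statement about $R^{(L,L)}$ that can be probed via BAR. Pick $u \in \dd{R}^{K} \setminus \{0\}$ with $Q^{(K,K)} u = 0$ and set $v = \lift{u}$, so that $(Qv)^{K} = Q^{(K,K)} u = 0$ and $(Qv)^{L} = Q^{(L,K)} u =: y^{L}$. Applying $R$ to the identity $Qv = \lift{y^{L}}$ and reading off the two blocks yields $u = R^{(K,L)} y^{L}$ and $R^{(L,L)} y^{L} = 0$. Since $u \ne 0$, also $y^{L} \ne 0$, so $R^{(L,L)}$, and hence $(R^{(L,L)})^{\rs{t}}$, is singular; pick any $\eta^{L} \in \dd{R}^{L} \setminus \{0\}$ with $(R^{(L,L)})^{\rs{t}} \eta^{L} = 0$.

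The next step is to exploit the BAR of \lem{key 1} along the one-parameter family $\theta = t \lift{\eta^{L}}$ for $t$ in a neighborhood of $0$ where $\varphi$ is analytic. By construction the coefficients $(R^{\rs{t}} \theta)_{i}$ on the right-hand side of \eq{key 1} vanish for $i \in L$ and equal $t ((R^{(L,K)})^{\rs{t}} \eta^{L})_{i}$ for $i \in K$, so only the $K$-indexed boundary terms survive. Independence of $Z^{K}(0)$ and $Z^{L}(0)$ gives $\varphi(t\lift{\eta^{L}}) = \varphi^{L}(t \eta^{L})$, and \lem{partial independence}, applied at $\theta^{K} = 0$, gives $\varphi_{i}(t\lift{\eta^{L}}) = \varphi_{i}(0)\, \varphi^{L}(t \eta^{L})$ for each $i \in K$. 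Dividing BAR through by the strictly positive factor $\varphi^{L}(t \eta^{L})$ then yields the polynomial identity
\begin{equation*}
-\tfrac{t^{2}}{2} \br{\eta^{L}, \Sigma^{(L,L)} \eta^{L}} - t \br{\mu^{L}, \eta^{L}} \;=\; t \br{(R^{(L,K)})^{\rs{t}} \eta^{L}, c^{K}},
\end{equation*}
where $c^{K}_{i} := \varphi_{i}(0)$ for $i \in K$.

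Both sides are polynomials in $t$; the right side is linear while the left side has a quadratic term. Equating the $t^{2}$ coefficients forces $\br{\eta^{L}, \Sigma^{(L,L)} \eta^{L}} = 0$, which is impossible because $\Sigma$, and hence its principal submatrix $\Sigma^{(L,L)}$, is positive definite and $\eta^{L} \ne 0$. This contradiction proves that $Q^{(K,K)}$ is invertible. The main obstacle is selecting $\theta$ so that the $L$-indexed boundary terms $\varphi_{i}(\theta)$ (which we have no direct handle on) drop out of BAR; the choice $\eta^{L} \in \ker((R^{(L,L)})^{\rs{t}})$, produced by the linear-algebra reduction in the first paragraph, accomplishes exactly this, and \lem{partial independence} is then essential to factor the remaining $K$-indexed boundary terms so that BAR collapses to a tractable polynomial identity in the single variable $t$.
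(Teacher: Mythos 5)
Your argument is correct in substance and takes a genuinely different, and considerably more elementary, route than the paper. The paper works with a left null vector $\eta$ of $Q^{(K,K)}$ and runs two pathwise It\^o computations with truncation functions: the first yields $\nu_i(\dd{R}^d_+)=-(Q\mu)_i>0$; the second, applied to $e^{-f_n(\br{\eta,V^K(t)})}$, uses the independence assumption to make the boundary terms factor and the positive definiteness of $\Sigma$ to conclude $\eta Q^{(K,J)}=0$; the final contradiction is that $\eta Y^K(t)\equiv 0$ is incompatible with $\dd{E}_\pi[Y_j(1)]>0$. You replace all of this by the complementary-minor observation that $Q^{(K,K)}$ is singular if and only if $R^{(L,L)}$ is (your block computation is a correct elementary proof of this), followed by a single evaluation of BAR \eq{key 1} along the line $\theta=t\lift{\eta^{L}}$ with $\eta^{L}\in\ker\bigl((R^{(L,L)})^{\rs{t}}\bigr)$, which kills the $L$-indexed boundary terms by design; \lem{partial independence} factors the $K$-indexed ones, and positive definiteness of $\Sigma^{(L,L)}$ rules out the vanishing of the $t^{2}$ coefficient. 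Both proofs ultimately rest on the same two inputs (the independence hypothesis and $\Sigma$ positive definite), but yours avoids the It\^o/truncation machinery entirely.

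One technical point needs repair. You invoke BAR and \lem{partial independence} at $\theta=t\lift{\eta^{L}}$ for $t$ ``in a neighborhood of $0$ where $\varphi$ is analytic,'' but $\eta^{L}$ need not be sign-definite, so $t\eta^{L}\le 0$ can fail for every $t\ne 0$, and the paper never establishes (nor assumes) that $\varphi$ is finite on a full neighborhood of the origin: \eq{key 1} and \eq{Palm 1} are only guaranteed where $\varphi(\theta)<\infty$, which a priori is only $\theta\le 0$. The fix is exactly the device the paper uses in the proof of \thr{marginals}: run your computation with the characteristic-function versions of BAR and of \eq{Palm 1} (i.e., $\theta$ replaced by $\imath\, t\lift{\eta^{L}}$, cf.\ \eq{PalmComplex}), divide by $\varphi^{L}(\imath\, t\eta^{L})$, which is nonzero for small $\abs{t}$ by continuity, and take real parts; the coefficient of $t^{2}$ is then $\tfrac12\br{\eta^{L},\Sigma^{(L,L)}\eta^{L}}$ and your contradiction goes through unchanged.
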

\begin{proof}
  In this proof, we apply truncation arguments similarly to those in \cite{DaiMiya2013}. To this end, we introduce the following sequences of functions. For each positive integer $n$, let
\begin{eqnarray*}
  g_{n}(s) =
  \begin{cases}
  \frac 12 (s+n+2)^{2}, \quad & - (n + 2) < s \le -(n+1),\\
  1 - \frac 12 (s+n)^{2}, \quad & -(n+1) < s \le -n,\\
  1, \quad & -n < s \le n,\\
  1 - \frac 12 (s-n)^{2}, \quad & n < s \le n+ 1,\\
  \frac 12 (n+2 - s)^{2}, \quad & n + 1 < s \le n+2,\\
  0, & s \le -(n+2) \mbox{ or } s > n+2,
  \end{cases}
\end{eqnarray*}
and let
\begin{eqnarray*}
  f_{n}(u) = \left\{\begin{array}{ll}
    \int_{0}^{u} g_{n}(s) ds, \quad & u \ge 0,\\
    - \int_{u}^{0} g_{n}(s) ds, \quad & u < 0.
      \end{array} \right.
\end{eqnarray*}
Clearly, for each fixed $n$, $f_{n}(u)$ is bounded, twice continuously
differentiable, and its derivatives $f'_{n}(u)$ and
$f''_{n}(u)$ are bounded by 1 in absolute values. Furthermore, for
each $u\in \dd{R}$,  $f'_{n}(u) = g_n(u)$ is 
monotone in $n$, and for each $u\in \dd{R}$,
\begin{eqnarray}
\label{eq:fn infty}
\lim_{n\to\infty} f_n(u) = u, \quad \lim_{n\to\infty} f'_n(u) = 1,
\quad \lim_{n\to\infty} f''_n(u) = 0. 
\end{eqnarray}
For each $i \in J$ and $t \ge 0$, we apply It\^{o}'s integration
formula for $f_{n}([QZ(t)]_{i})$ for each
fixed $n$, then it follows from \eq{QZ 1} that
\begin{eqnarray}
\label{eq:Ito 1}
  \lefteqn{\hspace{-7ex} f_{n}([V(t)]_{i}) - f_{n}([V(0)]_{i}) = \int_{0}^{t} f'_{n}([V(u)]_{i}) d([QX(u)]_{i} + Y_{i}(u))} \nonumber\\
 && \hspace{20ex} + \frac 12 \int_{0}^{t} f''_{n}([V(u)]_{i}) [Q \Sigma Q^{\rs{T}}]_{ii} du.
\end{eqnarray}
Because $f_n$, $f_n'$ and $f''_{n}$ are all bounded, we take the
expectation $\dd{E}_\pi$ on both sides of (\ref{eq:Ito 1}) with
$t=1$, and  obtain
\begin{eqnarray*}
  \lefteqn{0 = \int_{\dd{R}^d_+} f'_n([Qx]_i)  [Q\mu]_i \pi(dx) + \int_{\dd{R}^d_+} f'_n([Qx]_i)  \nu_i(dx)} \hspace{10ex}\\
  && \hspace{10ex} +  \frac{1}{2} [Q\Sigma Q^{\rs{t}}]_{ii} 
\int_{\dd{R}^d_+} f''_n([Qx]_i) \pi(dx).
\end{eqnarray*}
Applying the  dominated convergence
theorem on the $f''_n$ term and the monotone convergence theorem 
on two $f'_n$ terms, by letting $n \to \infty$, we have 
\begin{eqnarray}
\label{eq:QK Y}
  (Q\mu)_{i} + \nu_i(\dd{R}^d_+) = 0 \quad i \in J.
\end{eqnarray}

We now assume that $Q^{(K,K)}$ is singular. Then, there exists a
non-zero $|K|$-dimensional row vector $\eta$ such that 
\begin{eqnarray}
\label{eq:singular eta}
  \eta Q^{(K,K)} = 0.
\end{eqnarray}
We will prove that (\ref{eq:singular eta}) implies 
\begin{equation}
  \label{eq:QKL}
\eta Q^{(K, L)} =0.
\end{equation}

Assuming (\ref{eq:QKL}), we now show that it leads to a contradiction,
thus proving the lemma. To see this,  it follows from (\ref{eq:QZ 1})
that 
\begin{equation}
  \label{eq:VK}
V^K(t)= V^K(0) + (QX)^K(t) + Y^K(t).
\end{equation}
Since $V(t)=QZ(t)$, (\ref{eq:singular eta}) and (\ref{eq:QKL}) imply
that $\eta 
V^K(t)=0$ for all $t\ge 0$. Similarly, 
(\ref{eq:singular eta}) and (\ref{eq:QKL}) imply that $\eta
(QX(t))^K=0$ for all $t\ge 0$. Hence, \eq{VK} yields $\eta Y^K(t)=0$ for all $t\ge 0$.
Namely,
\begin{eqnarray*}
  \sum_{i \in K} \eta_{i} Y_{i}^{K}(t) = 0, \qquad t \ge 0.
\end{eqnarray*}
Assume $\eta_j\neq 0$ for some $j\in K$. Since $Y^{K}_{i}(t) = Y_{i}(t)$, we have 
\begin{eqnarray*}
  \sum_{i \in K} \eta_{i} \int_{0}^{t} 1(Z_{j}(u)=0) dY_{i}(u) = 0.
\end{eqnarray*}
By Lemma 4.5 of \cite{DaiWill1995}, we have
\begin{eqnarray*}
\int_{0}^{1} 1(Z_{j}(u)=0) dY_{i}(t)= 0  \quad \text{almost surely for
each pair } i\neq j.
\end{eqnarray*}
This yields
\begin{eqnarray*}
\eta_j\int_{0}^{1} 1(Z_{j}(u)=0) dY_{j}(t)= 0 \quad \text{almost surely},
\end{eqnarray*}
which leads to a contradiction because $\eta_j\neq 0$ for some $j \in J$ and \eq{QK Y} together with \eq{stability} implies, for every $j \in J$,
\begin{eqnarray*}
\dd{E}_\pi\left(\int_{0}^{1} 1(Z_{j}(u)=0)  dY_{j}(t)\right) =
\nu_j(\dd{R}^d_+) = -(Q\mu)_j>0.  
\end{eqnarray*}

Now we prove (\ref{eq:QKL}).
Note that $(QX)^K$ in (\ref{eq:VK}) is a $\abs{K}$-dimensional
Brownian motion with drift 
$Q^{(K, J)}\mu$ and covariance matrix 
\begin{displaymath}
Q^{(K, J)}\Sigma
(Q^{\rs{t}})^{(J, K)} =(Q\Sigma Q^{\rs{t}})^{(K, K)}.
\end{displaymath}
We apply It\^{o}'s integration formula to $h_n(V^K(t))$, where
$h_{n}(x) = e^{-f_{n}(\br{\eta,x})}$ for $x \in \dd{R}^{|K|}$. Then,
we have 
\begin{eqnarray}
\label{eq:Ito 2}
  \lefteqn{h_{n}(V^K(t)) - h_{n}(V^K(0)) = \sum_{i \in K} \int_{0}^{t}  \left. \frac {\partial h_{n}(x)}
    {\partial x_{i}} \right|_{x= V^K(u)} d(QX)^K_i(u)} \hspace{25ex}\nonumber\\ 
  && + \frac 12  \sum_{i,j \in K} \int_{0}^{t} \left. \frac {\partial^{2} h_{n}(x)} {\partial x_{i} \partial x_{j}} \right|_{x=V^{K}(u)} (Q \Sigma Q^{\rs{t}})^{(K,K)}_{ij} du \nonumber\\ 
  && + \sum_{i \in K} \int_{0}^{t}  \left. \frac {\partial h_{n}(x)} {\partial x_{i}} \right|_{x= V^{K}(u)} dY_{i}(u),
\end{eqnarray}
where 
\begin{eqnarray*}
 && \frac {\partial h_{n}(x)} {\partial x_{i}} = - \eta_{i}
 g_{n}(\br{\eta,x}) h_{n}(x), \\
 && \frac {\partial^{2} h_{n}(x)} {\partial x_{i} \partial x_{j}} = \eta_{i} \eta_{j} (g^{2}_{n}(\br{\eta,x}) + g'_{n}(\br{\eta,x})) h_{n}(x).
\end{eqnarray*}
Setting $t=1$ and taking expectation $\dd{E}_\pi$ on both side, 
we have
\begin{eqnarray}
 \lefteqn{0  = \sum_{i \in K} \left. \dd{E}_\pi\left[\frac {\partial h_{n}(x)}
    {\partial x_{i}} \right|_{x= V^K(0)} \right] (Q\mu)_i} \hspace{5ex} \nonumber\\
   && + \frac 12  \sum_{i,j \in K} (Q \Sigma Q^{\rs{t}})^{(K,K)}_{ij} \left. \dd{E}_\pi\left[\frac 
    {\partial^{2} h_{n}(x)} {\partial x_{i} \partial x_{j}}
  \right|_{x=V^{K}(0)} \right] \nonumber\\ 
  && {} + \sum_{i \in K} \dd{E}_{\pi}\left[\int_{0}^{1}  \left. \frac {\partial
      h_{n}(x)} {\partial x_{i}} \right|_{x= V^{K}(u)} dY_{i}(u)\right].  \label{eq:ito3}
\end{eqnarray}
Recall that $V^K(t)=Q^{(K,K)}Z^K(t)+Q^{K, L}Z^{L}(t)$. Let
\begin{displaymath}
M^K(t)=  Q^{K, L} Z^L(t).
\end{displaymath}
Because $\eta
Q^{(K,K)}=0$, we have $\eta V^K(t)=\eta M^K(t)$, and therefore, for $i\in K$,
\begin{eqnarray}
\label{eq:boundary2}
 \lefteqn{\hspace{-9ex} \dd{E}_{\pi}\left[\int_{0}^{1}  \left. \frac {\partial
 h_{n}(x)} {\partial x_{i}} \right|_{x= V^{K}(u)}
  dY_{i}(u)\right] = \dd{E}_{\pi}\left[\int_{0}^{1}  \left. \frac {\partial
      h_{n}(x)} {\partial x_{i}} \right|_{x= M^{K}(u)} dY_{i}(u)\right]} \hspace{20ex}\nonumber\\
&& = \nu_i(\dd{R}^d_+) 
\dd{E}_{\pi}\left[ \left. \frac {\partial
      h_{n}(x)} {\partial x_{i}} \right|_{x= M^{K}(0)} \right],
\end{eqnarray}
where in the second equality, we have used that fact that 
$M^{K}(t)$ is a functions of $Z^{L}(t)$ and \lem{partial independence}.
It follows from (\ref{eq:QK Y}), (\ref{eq:ito3}), and
(\ref{eq:boundary2}) that 
\begin{displaymath}
  \frac 12  \sum_{i,j \in K} (Q \Sigma Q^{\rs{t}})^{(K,K)}_{ij}
  \left. \dd{E}_\pi\left[\frac 
    {\partial^{2} h_{n}(x)} {\partial x_{i} \partial x_{j}}
  \right|_{x=V^{K}(0)} \right]=0,
\end{displaymath}
or equivalently
\begin{eqnarray}
\label{eqn:eta Q}
 \lefteqn{\frac 12 \sum_{i,j \in K} \eta_{i} \eta_{j} (Q \Sigma Q^{\rs{t}})^{(K,K)}_{ij}} \hspace{2ex} \nonumber\\
 && \times \dd{E}_{\pi}((g^{2}_{n}(\br{\eta,V^{K}(0)}) +
  g'_{n}(\br{\eta,V^{K}(0)})) h_{n}(V^{K}(0))) =0.
  \end{eqnarray}
By the construction of functions $g_n$ and $f_n$, $g'_{n}(u) \ge 0$
except for $u 
\in (n,n+2)$, in which 
 $g'_{n}(u) \in [-1,0)$, and 
$e^{-f_{n}(u)}$ monotonically converges
 to $e^{-u}$ as $n \to \infty$ and is bounded by $1$ for $u \ge 0$.
 Furthermore, $g_n(u)$ and $g_n'(u)$ are bounded by
 $1$  for all $u$ and $n$. We have 
\begin{eqnarray}
\label{eqn:gn inequality}
  \dd{E}_{\pi}\Bigl[(g^{2}_{n}(\br{\eta,V^{K}(0)}) +
  g'_{n}(\br{\eta,V^{K}(0)})) 1_{( \br{\eta,V^{K}(0)}\le 0)}
  h_{n}(V^{K}(0))\Bigr] \ge  0
\end{eqnarray} 
for each $n\ge 1$.
By dominated converge theorem,
\begin{eqnarray*}
\lefteqn{\lim_{n\to\infty } \dd{E}_{\pi}\Bigl[(g^{2}_{n}(\br{\eta,V^{K}(0)}) +
  g'_{n}(\br{\eta,V^{K}(0)})) 1_{( \br{\eta,V^{K}(0)}> 0)}
  h_{n}(V^{K}(0))\Bigr]} \hspace{30ex}\\
&& = \dd{E}_{\pi}\Bigl[  1_{( \br{\eta,V^{K}(0)}> 0)}
e^{-\br{\eta,V^{K}(0)}}\Bigr]>0, 
\end{eqnarray*} 
where the strict inequality follows from the fact that the Lebesgue measure of 
 set $\{z\in \dd{R}^d_+: \eta z^K>0\}$ is positive and the fact that
 $\pi(A)>0$ for every measurable set $A$ that has positive Lebesgue
 measure \cite{DaiHarr1992}. Therefore, \eqn{gn inequality} can be sharpened for some large $n_{0}$ in such a way that, for every $n \ge n_{0}$,
\begin{eqnarray*}
\dd{E}_{\pi}\Bigl[(g^{2}_{n}(\br{\eta,V^{K}(0)}) +
  g'_{n}(\br{\eta,V^{K}(0)})) 1_{( \br{\eta,V^{K}(0)}\le 0)}
  h_{n}(V^{K}(0))\Bigr]>  0.
\end{eqnarray*} 
Thus, from \eqn{eta Q} we arrive at
\begin{eqnarray*}
  \frac 12 \sum_{i,j \in K} \eta_{i} \eta_{j} (Q \Sigma Q^{\rs{t}})^{(K,K)}_{ij} = 0.
\end{eqnarray*}
Namely,
\begin{eqnarray*}
\frac{1}{2}\eta Q^{(K, J)}  \Sigma (Q^{\rs{t}})^{(J, K)} \eta^{\rs{t}}=0.
\end{eqnarray*}
Since $\Sigma$ is positive definite, for this to be true, we must have 
\begin{eqnarray}
\label{eq:hQ 1}
\eta Q^{(K, J)}=0,
\end{eqnarray}
thus proving (\ref{eq:QKL}). 
\end{proof}
We now return to the proof of \thr{marginals}. Because 
$V^K(t)=Q^{(K,K)}Z^K(t)+Q^{K,L}Z^L(t)$
and $Q^{(K,K)}$ is
invertible by \lem{invertible}, we have 
\begin{eqnarray}
\lefteqn{Z^K(t) + W^K(t) =  (Q^{(K,K)})^{-1} V^K(t) }  \nonumber\\
&& = (Q^{(K,K)})^{-1} V^K(0) +  (Q^{(K,K)})^{-1} (QX)^K(t) +  (Q^{(K,K)})^{-1} Y^K(t), \hspace{7ex}
\label{eq:Z 2}
\end{eqnarray}
where 
\begin{eqnarray*}
  W^{K}(t) = (Q^{(K,K)})^{-1} Q^{(K,L)} Z^{L}(t),
\end{eqnarray*}
and the second equality follows from (\ref{eq:VK}).
Note that  
\begin{displaymath}
(Q^{(K,K)})^{-1} (QX)^K(t)
\end{displaymath}
 is a  
$\abs{K}$-dimensional Brownian motion with drift vector
$ \tilde{\mu}(K) = (Q^{(K,K)})^{-1} (Q \mu)^{K}$ 
and covariance
matrix
\begin{eqnarray*}
 \tilde{\Sigma}(K) = (Q^{(K,K)})^{-1} (Q \Sigma Q^{\rs{t}})^{(K,K)}
 ((Q^{(K,K)})^{-1})^{\rs{t}}.   
\end{eqnarray*}
We now apply It\^{o}'s integral formula to $f(Z^{K}(t) + W^{K}(t))$,
where   $f(x) \equiv e^{ \imath \br{\theta^{K}, x}}$ with $x \in \dd{R}^{|K|}$ for each
fixed $\theta^{K} \in \dd{R}^{|K|}$ and  $\imath =
\sqrt{-1}$ is the imaginary unit of a complex number. 
(We really apply the It\^{o} formula twice, one for
$\cos(\br{\theta^{K}, x})$  and one for $\sin(\br{\theta^{K}, x})$.)
We have 
\begin{eqnarray*}
  \lefteqn{f(Z^{K}(t) + W^{K}(t)) - f(Z^{K}(0) + W^{K}(0))}\\
  && \hspace{-2ex} = - \frac 12 \int_{0}^{t} \sum_{i,j \in K} \theta_{i} \theta_{j}
  [\tilde \Sigma(K)]_{ij} f(Z^{K}(u) + W^{K}(u))du\\
  && + \imath \int_{0}^{t} \sum_{i \in K} \theta_{i} f(Z^{K}(u)
  + W^{K}(u))  d [(Q^{(K,K)})^{-1} (Q X)^{K}]_{i}\\ 
  && + \imath \int_{0}^{t} \sum_{i \in K} \theta_{i} f(Z^{K}(u) + W^{K}(u)) \sum_{j \in K} [\tilde R(K)]_{ij} dY_{j}(u),
\end{eqnarray*}
where 
\begin{eqnarray*}
 \tilde{R}(K) = (Q^{(K,K)})^{-1}.
\end{eqnarray*}
Because $|f(Z^{K}(u) + W^{K}(u)))| \le 1$, setting $t=1$,
we can take  expectation $\dd{E}_\pi$ on both sides of this equation  for
$\theta \in \dd{R}^{d}$, 
we have 
\begin{eqnarray}
\label{eq:ZK 0}
  \lefteqn{\hspace{-3ex}-\frac 12 \sum_{i,j \in K} \theta_{i} \theta_{j} [\tilde
    \Sigma(K) ]_{ij}
\dd{E}_{\pi}\left(e^{\imath \br{\theta^{K}, Z^{K}(0)}+ \imath \br{\theta^{K}, W^{K}(0)}} \right)} \nonumber \\
  && \hspace{-5ex} + \imath \sum_{i \in K} \theta_{i} [\tilde \mu(K)]_{i} \dd{E}_{\pi} \left(e^{\imath \br{\theta^{K}, Z^{K}(0)}+ \imath \br{\theta^{K}, W^{K}(0)}} \right) \nonumber\\
  && \hspace{-5ex} + \imath \sum_{i,j \in K} \theta_{i} [\tilde
  R(K)]_{ij} \dd{E}_{\pi}\left(\int_0^1 e^{\imath \br{\theta^{K}, Z^{K}(u)}+
      \imath \br{\theta^{K}, W^{K}(u)}} dY_j(u)\right) = 0. 
\end{eqnarray}

  We now use the assumption that $Z^{K}(0)$ and $Z^{L}(0)$ are
  independent under $\pi$. Under the independence assumption, one can
  show that \eq{Palm 1} also holds when $\theta$ is
  replaced by $\imath \theta$, i.e., 
  \begin{equation}
    \label{eq:PalmComplex}
    \varphi_j(\imath \theta) =     \varphi_j^K(\imath \theta^K)\varphi^L(\imath \theta^L) \quad \text{ for each } \theta\in \R^d.
  \end{equation}
  Equation (\ref{eq:PalmComplex}) can be proved following the proof of
  Lemma \ref{lem:partial independence} by using Riemann-Lebesgue lemma (e.g., see Section 7.1 of \cite{Stri2003}) for \eq{thetajlimit} and the characteristic
  function version of \eq{key 1} when $\theta$ is replaced by $\imath
  \theta$. For each $\theta^{K} \in \dd{R}^{|K|}$, let $\theta\in\R^d$
  be the corresponding vector with $\theta^{L} = \theta^{K} (Q^{(K,K)})^{-1}
  Q^{(K,L)}$.  For this $\theta\in \R^d$, it follows from the
  definition of $W^{K}(t)$ that (\ref{eq:PalmComplex}) is equivalent to the following equality
\begin{align*}
\dd{E}_{\pi} & \left(\int_0^1 e^{\imath \br{\theta^{K}, Z^{K}(u)}+
      \imath \br{\theta^{K}, W^{K}(u)}} dY_j(u)\right)\\
    & \hspace{10ex} =
\dd{E}_{\pi}\left(\int_0^1 e^{\imath \br{\theta^{K}, Z^{K}(u)}} dY_j(u)\right) 
\dd{E}_{\pi} \left(    e^{ \imath \br{\theta^{K}, W^{K}(0)}} \right).
\end{align*}
Also, $Z^{K}(0)$ and $W^{K}(0)$ are independent because $W^{K}(0)$ is a linear transform of $Z^{L}(0)$. Thus, combining these facts with \eq{ZK 0} yields
\begin{eqnarray}
\label{eq:ZK 1}
  \lefteqn{\Bigl(-\frac 12 \sum_{i,j \in K} \theta_{i} \theta_{j} [\tilde
    \Sigma(K) ]_{ij} 
+ \imath \sum_{i \in K} \theta_{i} [\tilde \mu(K)]_{i}\Bigr)
\dd{E}_{\pi}\bigl(e^{\imath \br{\theta^{K}, Z^{K}(0)}} \bigr)} \hspace{5ex}\nonumber \\
  && {} + \imath \sum_{i,j \in K} \theta_{i} [\tilde
  R(K)]_{ij} 
 \dd{E}_{\pi}\left(\int_0^1 e^{\imath \br{\theta^{K}, Z^{K}(u)}}
 dY_j(u)\right) = 0. 
\end{eqnarray}
In what follows we will get the moment generating function version of \eq{ZK 1} with $\theta^{K} \le 0$. One may wonder why we do not directly consider this generating function version. This is because $\theta^{L} \equiv \theta^{K} (Q^{(K,K)})^{-1} Q^{(K,L)} \le 0$ may not be true in \eq{Palm 1} for $\theta^{K} \le 0$.

Denote the left-hand side of \eq{ZK 1} by $g(\imath \theta)$ as a
function of $\imath \theta$ for $\theta \in \dd{R}^{|K|}$. We can replace
$\imath \theta$ of $g(\imath \theta)$ by a complex vector $z \equiv (z_{1}, z_{2}, \ldots,
z_{|k|})$ such that $\Re z_{i} \le 0$ for all $i =1, \ldots, K$, where $\Re
z_{i}$ is the real part of $z_{i}$. Then, it is easy to see that $g(z_{1}, z_{2},
\ldots, z_{|K|})$ is analytic in each $z_{i}$ such that $\Re z_{i} <
0$ when  $z_{j}$ for $j \not= i$ is fixed satisfying $\Re z_{j} \le 0$. Let $i=1$ and fix an arbitrary $\theta \in \dd{R}^{|K|}$. Since $g(z_{1}, \imath \theta_{2}, \ldots, \imath \theta_{|K|})$ converges to $g(\imath \theta) \equiv 0$ as $z_{1}$ with $\Re z_{1} < 0$ continuously moves to $\imath \theta_{1}$, we must have
\begin{eqnarray*}
  g(z_{1}, \imath \theta_{2}, \ldots, \imath \theta_{|K|}) = 0
     \text{ for } \Re z_{1} \le 0
\end{eqnarray*}
by the so called boundary uniqueness theorem (e.g., see page 371 of Volume I of \cite{Mark1977}). We then inductively replace $\imath \theta_{i}$ by $z_{i}$ with $\Re z_{i} \le 0$ for $i=2,3, \ldots, |K|$, and we have
\begin{eqnarray*}
  g(z_{1}, z_{2}, \ldots, z_{|K|}) = 0, \quad \Re z_{i} \le 0 \mbox{ for } i=1,2,\ldots,|K|.
\end{eqnarray*}
In particular, letting $z_{i} = \theta_{i}$ for real $\theta_{i} \le 0$ for $i=1,2,\ldots, |K|$, we have
\begin{eqnarray}
\label{eq:ZK 2}
 \lefteqn{\left( \frac 12 \sum_{i,j \in K} \theta_{i} \theta_{j} [\tilde{\Sigma}(K)]_{ij} + \sum_{i \in K} \theta_{i} [\tilde{\mu}(K)]_{i} \right) \varphi^{K}(\theta^{K})} \nonumber \hspace{20ex}\\
 && + \sum_{i,j \in K} \theta_{i} [\tilde{R}(K)]_{ij} \varphi_{j}^{K}(\theta^{K}) = 0.
\end{eqnarray}
We are now ready to prove the remaining part of Theorem \ref{thr:marginals}, part (a), in the following lemma.

\begin{lemma}
\label{lem:complete-S}
  Under the assumptions of \thr{marginals}, $\tilde{R}(K)$ and $\tilde{R}(L)$ are completely-$\sr{S}$ matrices.
\end{lemma}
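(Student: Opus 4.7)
By symmetry (interchange of $K$ and $L$), it suffices to prove that $\tilde{R}(K)$ is completely-$\sr{S}$. The plan is to combine a Farkas-type theorem of alternatives with the BAR \eq{ZK 2} just derived for the marginal $\pi^K$. Note that \eq{ZK 2} has precisely the form of a BAR for a putative $|K|$-dimensional $(\tilde{\Sigma}(K), \tilde{\mu}(K), \tilde{R}(K))$-SRBM, with $\pi^K$ as candidate stationary distribution and the finite positive measures associated with $\varphi_j^K$, $j\in K$, as candidate boundary measures; the $\sr{S}$ property of the reflection data should be extractable from this BAR together with the positivity of these measures.

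First I would prove that the full matrix $\tilde{R}(K)$ is $\sr{S}$. Suppose for contradiction that it is not; then by a standard separation argument there exists $y\in \R^{|K|}_+$, $y\neq 0$, with $\tilde{R}(K)^{\rs{t}} y\leq 0$. Substituting $\theta^K=-s y$, $s>0$, into \eq{ZK 2} yields
\begin{equation*}
 \Bigl(\tfrac{s^2}{2}\, y^{\rs{t}}\tilde{\Sigma}(K)\, y - s\, y^{\rs{t}}\tilde{\mu}(K)\Bigr)\varphi^K(-sy) -  s\sum_{j\in K}(y^{\rs{t}}\tilde{R}(K))_j\,\varphi_j^K(-sy)=0.
\end{equation*}
Each term $-s(y^{\rs{t}}\tilde{R}(K))_j\,\varphi_j^K(-sy)$ in the sum is non-negative since $(y^{\rs{t}}\tilde{R}(K))_j\leq 0$ and $\varphi_j^K(-sy)\geq 0$, and $\varphi^K(-sy)>0$ because it is the expectation of a strictly positive random variable. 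Moreover, $\tilde{\Sigma}(K)$ is positive definite (as $\Sigma$ is positive definite and $Q^{(K,K)}$ is invertible by \lem{invertible}), so $y^{\rs{t}}\tilde{\Sigma}(K) y>0$; hence for $s$ sufficiently large the parenthesized coefficient is strictly positive. This forces the left-hand side of the displayed equation to be strictly positive, contradicting that it equals zero, and proving that $\tilde{R}(K)$ is $\sr{S}$.

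The remaining and more delicate task is to show that each proper principal submatrix $\tilde{R}(K)^{(U,U)}$, $\emptyset\neq U\subsetneq K$, is also $\sr{S}$. Here the argument of the previous paragraph does not apply directly, since substituting $\theta^K=-s\lift{y^U}$ for a dual vector $y^U\in\R^{|U|}_+$ with $(\tilde{R}(K)^{(U,U)})^{\rs{t}} y^U\leq 0$ leaves the boundary terms indexed by $j\in K\setminus U$ with coefficients $(y^{U\,\rs{t}}\tilde{R}(K)^{(U,K\setminus U)})_j$ of uncontrolled sign. My plan is to reduce \eq{ZK 2} to a BAR in the $U$-coordinates by simultaneously sending $\theta_i\downarrow-\infty$ for $i\in K\setminus U$. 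By an analog of \eq{thetajlimit}, which follows from the same application of Lemma 4.5 of \cite{DaiWill1995}, $\varphi_j^K(\theta^K)\to 0$ whenever some $\theta_i$ with $i\neq j$ tends to $-\infty$; thus when $|K\setminus U|\geq 2$ all boundary terms indexed by $j\in K\setminus U$ disappear in the limit, leaving a reduced identity whose reflection matrix is exactly $\tilde{R}(K)^{(U,U)}$, to which the Farkas argument of the previous paragraph then applies. The main obstacle is the careful execution of these iterated limits: one must choose the normalization so that the dominant terms in the quadratic form, the drift, and the surviving boundary masses admit finite, strictly positive limits, in the spirit of the Tauberian manipulations used in the proofs of \lem{partial independence} and \lem{invertible}; the case $|K\setminus U|=1$ requires separately handling the single boundary term that survives the limit.
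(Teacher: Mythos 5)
Your treatment of the \emph{full} matrix $\tilde{R}(K)$ is correct and is a genuinely different route from the paper's: you test \eq{ZK 2} along a ray $\theta^{K}=-sy$ supplied by a Ville-type theorem of the alternative and let the positive definite quadratic form dominate for large $s$. The paper never does this; it only ever tests \eq{ZK 2} along single coordinate directions $(\lift{\theta_{\ell}})^{K}$. Your argument here is clean and, for the full-matrix $\sr{S}$-property, arguably tighter than the paper's.

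The principal-submatrix part, however, contains a genuine gap, and the repair you sketch would fail as described. If you send $\theta_{i}\downarrow-\infty$ for all $i\in K\setminus U$, then by the very analogue of \eq{thetajlimit} you invoke, $\varphi_{j}^{K}(\theta^{K})\to 0$ for \emph{every} $j$ for which some $\theta_{i}$ with $i\ne j$ diverges --- in particular for every $j\in U$, not only for $j\in K\setminus U$. So the limit annihilates precisely the boundary terms you need to keep, while the coefficients $\theta_{i}\theta_{j}$ and $\theta_{i}$ attached to the surviving quantities blow up and $\varphi^{K}(\theta^{K})$ itself typically tends to the $\pi$-mass of a lower-dimensional face, i.e.\ to $0$; after any normalization the ``reduced identity'' degenerates rather than becoming a BAR with reflection matrix $\tilde{R}(K)^{(U,U)}$. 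The paper's device is different and sidesteps all of this: fix $\ell\in U$, set $\theta_{j}=0$ (not $-\infty$) for $j\ne\ell$, divide \eq{ZK 2} by $\theta_{\ell}<0$ to obtain \eq{ZK 3}, and let only $\theta_{\ell}\downarrow-\infty$. This first yields $[\tilde{R}(K)]_{\ell\ell}>0$, and then, for finite but very negative $\theta_{\ell}$, the weights $\varphi_{j}^{K}((\lift{\theta_{\ell}})^{K})$ with $j\ne\ell$ are uniformly small while $\varphi_{\ell}^{K}((\lift{\theta_{\ell}})^{K})$ does not depend on $\theta_{\ell}$ at all, so the positive row-$\ell$ inequality \eq{positive 1} survives truncation of the sum to any $U\ni\ell$. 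If you want to stay within your Farkas framework, the natural target is the characterization ``$A$ is completely-$\sr{S}$ iff for every $y\ge 0$, $y\ne0$, there is $i$ with $y_{i}>0$ and $(A^{\rs{t}}y)_{i}>0$''; but for $y$ supported on a proper subset $U$ you must then control the uncontrolled-sign terms $s\,(\tilde{R}(K)^{\rs{t}}y)_{j}\varphi_{j}^{K}(-sy)$, $j\notin U$, against $s^{2}\varphi^{K}(-sy)$ as $s\to\infty$, and that comparison is exactly what your sketch leaves open.
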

\begin{proof}
  For an arbitrarily fixed $\ell \in K$, let $\theta_{j}^{K} = 0$ in $\theta^{K}$ for $j \ne \ell$. We denote this vector $\theta^{K}$ as $(\lift{\theta_{\ell}})^{K}$. For $j \ne \ell$, let $\theta_{j} = 0$ in \eq{ZK 2} and divide the resulting formula by $\theta_{\ell} < 0$, then we have
\begin{eqnarray}
\label{eq:ZK 3}
 \hspace{-3ex} \left(\frac 12 \theta_{\ell} [\tilde{\Sigma}(K)]_{\ell \ell} + [\tilde{\mu}(K)]_{\ell} \right) \varphi^{K}((\lift{\theta_{\ell}})^{K}) + \sum_{j \in K} [\tilde{R}(K)]_{\ell j} \varphi_{j}^{K}((\lift{\theta_{\ell}})^{K}) = 0.
\end{eqnarray}
Similarly to \eq{Palm 2},
\begin{eqnarray*}
  - \lim_{\theta_{\ell} \downarrow -\infty} \Big(\frac 12 \theta_{\ell} [\tilde{\Sigma}(K)]_{\ell \ell} + [\tilde{\mu}(K)]_{\ell} \Big) \varphi^{K}((\lift{\theta_{\ell}})^{K}) = [\tilde{R}(K)]_{\ell \ell} \varphi_{\ell}^{K}(0^{K}).
\end{eqnarray*}
Since the left-hand side of this formula is positive by \eq{Palm 2},
its right-hand side must be positive. Hence, $[\tilde{R}(K)]_{\ell
  \ell} > 0$. Furthermore, we can take sufficiently small
$\theta_{\ell} < 0$ such that 
\begin{eqnarray*}
  \left(\frac 12 \theta_{\ell} [\tilde{\Sigma}(K)]_{\ell \ell} + [\tilde{\mu}(K)]_{\ell} \right) \varphi^{K}((\lift{\theta_{\ell}})^{K}) < 0.
\end{eqnarray*}
By \eq{ZK 3}, we have, for this $\theta_{\ell}$,
\begin{eqnarray}
\label{eq:positive 1}
  \sum_{j \in K} [\tilde{R}(K)]_{\ell j} \varphi_{j}^{K}((\lift{\theta_{\ell}})^{K}) > 0.
\end{eqnarray}
Since $\varphi_{j}^{K}((\lift{\theta_{\ell}})^{K}) > 0$ for all $j \in
K$, $\tilde{R}(K)$ is an $\sr{S}$-matrix. Let $U$ be a subset of $K$
such that $\ell \in U$ and $U \ne K$, then we can choose
$\theta_{\ell}$ such that
$\varphi_{j}^{K}((\lift{\theta_{\ell}})^{K})$ is sufficiently small
for $j \in K \setminus U$. This yields that $(\tilde{R}(K))^{(U,U)}$
is an $\sr{S}$-matrix, and therefore we have proved that $\tilde{R}(K)$
is a completely-$\sr{S}$ matrix. 
\end{proof}

We now can see that \eq{ZK 1} is nothing but the
 BAR for the $|K|$-dimensional SRBM with data
$(\tilde{\Sigma}(K), \tilde{\mu}(K), \tilde{R}(K))$ because
$\tilde{R}(K)$ is completely-$\sr{S}$ by \lem{complete-S}. By part (b)
of Lemma \ref{lem:key 1}, the $\abs{K}$-dimensional
$(\tilde{\Sigma}(K), \tilde{\mu}(K), \tilde{R}(K))$-SRBM has a
stationary distribution that is equal to the distribution of $Z^K(0)$
under $\pi$. By the symmetric roles of $K$ and $L$, the 
the $\abs{L}$-dimensional
$(\tilde{\Sigma}(L), \tilde{\mu}(L), \tilde{R}(L))$-SRBM has a
stationary distribution that is equal to the distribution of $Z^L(0)$
under $\pi$.

\subsection{Proof of \thr{decomposition}}
\label{sect:theorem 2}

First of all, we will prove if $Z^{K}(0)$ and $Z^{L}(0)$ are
independent and $Z^{K}(0)$ is of product form under $\pi$, then
\eq{condition1} and \eq{condition2} hold. According to
\thr{marginals}, the distribution of $Z^{\{i\}}(0)$ under $\pi$ is
equal to the stationary distribution of $(\tilde{\Sigma}(\{i\}),
\tilde{\mu}(\{i\}), \tilde{R}(\{i\}))$-SRBM for $i \in K$. The
distribution of $Z^{L}(0)$ under $\pi$ is equal to the stationary
distribution of $(\tilde{\Sigma}(L), \tilde{\mu}(L),
\tilde{R}(L))$-SRBM. Then by \lem{key 1}, for $\theta\in \dd{R}^d$ with
$\theta\le 0$,
\begin{eqnarray}
\label{eq:1,d-1}
&& -\left(\frac{1}{2}\frac{\Sigma_{ii}}{R_{ii}}\theta_i+((R^{(K,K)})^{-1}\mu^K)_i\right)\varphi^{\{i\}}(\theta_i) =1, \quad i \in K, \\
\label{eq:1,d-2}
&& -\left( \frac 12 \sum_{i,j \in L} \theta_{i} \theta_{j} [\tilde{\Sigma}(L)]_{ij} + \sum_{i \in L} \theta_{i} [\tilde{\mu}(L)]_{i} \right) \varphi^{L}(\theta^{L}) \nonumber \hspace{10ex}\\
&& \hspace{30ex} = \sum_{i,j \in L} \theta_{i} [\tilde{R}(L)]_{ij} \varphi_{j}^{L}(\theta^{L}).
\end{eqnarray}
By the definition of $\tilde{\Sigma}(L)$, $\tilde{\mu}(L)$ and $\tilde{R}(L)$, we can find they are
\begin{eqnarray*}
 && \hspace{-3ex} \tilde{\Sigma}(L)=\Sigma^{(L,L)} +R^{(L,K)}(R^{(K,K)})^{-1}\Sigma^{(K,K)}((R^{(K,K)})^{-1})^{\rs{t}}(R^{(L,K)})^{\rs{t}}\nonumber\\ 
&& \hspace{6ex} -\Sigma^{(L,K)}((R^{(K,K)})^{-1})^{\rs{t}}(R^{(L,K)})^{\rs{t}} - R^{(L,K)}(R^{(K,K)})^{-1}(\Sigma^{(L,K)})^{\rs{t}},\\
 && \hspace{-3ex}  \tilde{\mu}(L)=\mu^{L}-R^{(L,K)}(R^{(K,K)})^{-1}\mu^K, \\
&& \hspace{-3ex} \tilde{R}(L)=R^{(L,L)}.
\end{eqnarray*}  
By the independence assumptions, we have, for $i \in K$,
\begin{eqnarray*}
  \varphi(\theta)=\varphi^{\{i\}}(\theta_i)\varphi^{J\setminus\{i\}}(\theta^{J\setminus\{i\}})=\varphi^{K}(\theta^{K})\varphi^{L}(\theta^{L}).
\end{eqnarray*}
By \lem{partial independence}, we get
\begin{eqnarray*}
  \varphi_i(\theta)=\varphi^{J\setminus\{i\}}(\theta^{J\setminus\{i\}}), \quad i \in K, \qquad
  \varphi_j(\theta)=\varphi^{K}(\theta^{K})\varphi_j^{L}(\theta^{L}), \quad j \in L.
\end{eqnarray*}
 Using the fact that $\varphi_i^{\{i\}}(0)=1$, we can
rewrite \eq{1,d-1} and \eq{1,d-2} as 
\begin{eqnarray}
\label{eq:1}
&& \hspace{-10ex} -\left(\frac{1}{2}\frac{\Sigma_{ii}}{R_{ii}}\theta_i+((R^{(K,K)})^{-1}\mu^K)_i\right)\varphi(\theta) = \varphi_i(\theta), \qquad i \in K,\\
\label{eq:d-1}
&& \hspace{-10ex} -\left( \frac 12 \sum_{i,j \in L} \theta_{i} \theta_{j} [\tilde{\Sigma}(L)]_{ij} + \sum_{i \in L} \theta_{i} [\tilde{\mu}(L)]_{i} \right) \varphi(\theta) = \sum_{i,j \in L} \theta_{i} [\tilde{R}(L)]_{ij} \varphi_{j}(\theta).
\end{eqnarray}
We further modify \eq{1} into
\begin{eqnarray}
\label{eq:1modify}
 -\left(\frac{1}{2}\frac{\Sigma_{ii}}{R_{ii}}\theta_i+((R^{(K,K)})^{-1}\mu^K)_i\right)\gamma_i(\theta)\varphi(\theta) = \gamma_i(\theta)\varphi_i(\theta).
\end{eqnarray}
Then adding \eq{1modify} for $i \in K$ and \eq{d-1}, we have
\begin{eqnarray}
\label{eq:keymodify}
\lefteqn{-\Bigg(\sum_{i \in K}\left(\frac{1}{2}\frac{\Sigma_{ii}}{R_{ii}}\theta_i+((R^{(K,K)})^{-1}\mu^K)_i\right)\gamma_i(\theta)} \nonumber\\
&&+\frac{1}{2}\sum_{i,j \in L} \theta_{i} \theta_{j} [\tilde{\Sigma}(L)]_{ij} + \sum_{i \in L} \theta_{i} [\tilde{\mu}(L)]_{i} \Bigg) \varphi(\theta) =\sum_{i=1}^{d} \gamma_{i}(\theta) \varphi_{i}(\theta).
\end{eqnarray}
So according to \lem{key 1}, we have
\begin{eqnarray}
\label{eq:equalgamma}
\lefteqn{\hspace{-5ex} \gamma(\theta)=-\Bigg(\sum_{i \in K}\left(\frac{1}{2}\frac{\Sigma_{ii}}{R_{ii}}\theta_i+((R^{(K,K)})^{-1}\mu^K)_i\right)\gamma_i(\theta)} \hspace{10ex}\nonumber\\
&& \quad +\frac{1}{2}\sum_{i,j \in L} \theta_{i} \theta_{j} [\tilde{\Sigma}(L)]_{ij} + \sum_{i \in L} \theta_{i} [\tilde{\mu}(L)]_{i} \Bigg).
\end{eqnarray}
Comparing the coefficients of $\theta_1\theta_j$ and coefficients of $\theta_i$ of both sides, we can get
\begin{eqnarray}
\label{eq:condition11}
&&\hspace{-8ex} -\frac{1}{2}(\Sigma_{ij}+\Sigma_{ji})=-\frac{\Sigma_{ii}}{2R_{ii}}R_{ji}-\frac{\Sigma_{jj}}{2R_{jj}}R_{ij}, \; i,j \in K, \\
\label{eq:condition21}
&&\hspace{-8ex} -\frac{1}{2}(\Sigma_{ij}+\Sigma_{ji})=-\frac{\Sigma_{ii}}{2R_{ii}}R_{ji}, \qquad i \in K, \; j \in L,\\
\label{eq:condition3}
&&\hspace{-8ex} R^{(L,K)}(R^{(K,K)})^{-1}\Sigma^{(K,K)}((R^{(K,K)})^{-1})^{\rs{t}}(R^{(L,K)})^{\rs{t}}\nonumber\\ 
&& \hspace{-6ex}- \Sigma^{(L,K)}((R^{(K,K)})^{-1})^{\rs{t}}(R^{(L,K)})^{\rs{t}} - R^{(L,K)}(R^{(K,K)})^{-1}(\Sigma^{(L,K)})^{\rs{t}}=0.
\end{eqnarray}
Observe that \eq{condition11} is equivalent to \eq{condition1}, and
\eq{condition21} is equivalent to \eq{condition2}. Under
\eq{condition1} and \eq{condition2}, \eq{condition3} automatically
holds. So we have proved if $Z^{K}(0)$ and $Z^{L}(0)$ are independent and
$Z^{K}(0)$ is of product form under $\pi$, then \eq{condition1} and
\eq{condition2} hold. 

Next we will prove that if $\Sigma$ and $R$ satisfy (\ref{eq:condition1}) and
  (\ref{eq:condition2}), 
  and the $\abs{L}$-dimensional $(\Sigma^{(L,L)}, \tilde{\mu}(L),
  R^{(L,L)})$-SRBM has a stationary distribution, then $Z^{K}(0)$ and
  $Z^{L}(0)$ are independent and $Z^{K}(0)$ is of product form under
  $\pi$. First observe that if \eq{condition1} and \eq{condition2}
  holds, then \eq{condition11}, \eq{condition21} and \eq{condition3}
  hold. Therefore, \eq{equalgamma} holds.  

Because the skew symmetry condition (\ref{eq:condition1}) holds, the
$(\Sigma^{(K,K)}$, $\mu^{(K,K)}$, 
$R^{(K,K)})$-SRBM has a product form stationary distribution
\cite{HarrWill1987a}. Let 
$\tilde{\varphi}^K(\theta^K)$ and 
$\tilde{\varphi}_i^K(\theta^K)$ be the moment generating functions of
the stationary distribution and $i$th boundary measure for this
$(\tilde{\Sigma}(K), \tilde{\mu}(K), \tilde{R}(K))$-SRBM. Then  
\begin{eqnarray*}
&& \hspace{-4ex} -\left(\frac{1}{2}\frac{\Sigma_{ii}}{R_{ii}}\theta_i+((R^{(K,K)})^{-1}\mu^K)_i\right)(\tilde{\varphi}^K)^{\{i\}}(\theta_i) =1, \quad i \in K.
\end{eqnarray*}
Because the $\abs{L}$-dimensional $(\Sigma^{(L,L)}, \tilde{\mu}(L),
  R^{(L,L)})$-SRBM has a stationary distribution,
  $\tilde{\Sigma}(L)=\Sigma^{(L,L)}$ and $\tilde{R}(L)=R^{(L,L)}$, we have
\begin{eqnarray*}
  -\left( \frac 12 \sum_{i,j \in L} \theta_{i} \theta_{j} [\tilde{\Sigma}(L)]_{ij} + \sum_{i \in L} \theta_{i} [\tilde{\mu}(L)]_{i} \right) \tilde{\varphi}^{L}(\theta^{L}) = \sum_{i,j \in L} \theta_{i} [\tilde{R}(L)]_{ij} \tilde{\varphi}_{j}^{L}(\theta^{L}).
\end{eqnarray*}
where $\tilde{\varphi}^{L}(\theta^{L})$ and
$\tilde{\varphi}_{j}^{L}(\theta^{L})$ are the  moment generating
functions of the stationary distribution and $j$th boundary measure for
$(\Sigma^{(L,L)}, \tilde{\mu}(L), 
  R^{(L,L)})$-SRBM.

Let $\tilde{\varphi}(\theta)=\tilde{\varphi}^{K}(\theta^{K})\tilde{\varphi}^{L}(\theta^{L})$, $\tilde{\varphi}_{i}(\theta)=\tilde{\varphi}^K_{i}(\theta^K)\tilde{\varphi}^L(\theta^L)$ for $i \in K$ and $\tilde{\varphi}_{j}(\theta)=\tilde{\varphi}^K(\theta^K)\tilde{\varphi}^L_{j}(\theta^L)$ for $j \in L$. Then we can see \eq{1}, \eq{d-1}, \eq{1modify} and \eq{keymodify} hold with $\varphi(\theta)$, $\varphi_i(\theta)$ and $\varphi_j(\theta)$ replaced by $\tilde{\varphi}(\theta)$, $\tilde{\varphi}_i(\theta)$ and $\tilde{\varphi}_j(\theta)$. Furthermore, as \eq{equalgamma} holds, we conclude
\begin{equation*}
\gamma(\theta)\tilde{\varphi}(\theta)=\sum_{i=1}^{d}\gamma_i(\theta)\tilde{\varphi}_{i}(\theta)
\end{equation*}
By part (b) of  \lem{key 1}, we know
$\varphi(\theta)=\tilde{\varphi}(\theta)$,
$\varphi^K(\theta^K)=\tilde{\varphi}^K(\theta^K)$ and
$\varphi^L(\theta^L)=\tilde{\varphi}^L(\theta^L)$. So
$\varphi(\theta)=\varphi^K(\theta^K)\varphi^L(\theta^L)$, that is,
$Z^{K}(0)$ and $Z^{L}(0)$ are independent under $\pi$. Furthermore,
the distribution of $Z^{K}(0)$ under $\pi$ is equal to the stationary
distribution of the $|{K}|$-dimensional $(\Sigma^{(K,K)}$, $\mu^{K}$,
$R^{(K,K)})$-SRBM. By \eq{condition1}, the distribution of $Z^{K}(0)$
is of product form  because the  skew symmetry condition (\ref{eq:skew 
  symmetric})  in \cite{HarrWill1987a} is satisfied. 

\section{Concluding remarks}
\label{sect:concluding}

There are two directions for future study. We first
comment on the marginal distributions. In the proof of
\thr{marginals}, we may only use the following fact to complete the
proof. Random vectors
 $Z^{K}(0)$ and $W^{K}(0) \equiv (Q^{(K,K)})^{-1} Q^{(K,L)}
Z^{L}(0)$ are ``weakly independent through convolution'' under the
stationary distribution $\pi$, that is, for all $\theta \in
\dd{R}^{|K|}$, 
\begin{eqnarray}
\label{eq:convolution indep}
  \dd{E}_{\pi}(e^{\imath \br{\theta, (Z^{K}(0) + W^{K}(0))}}) = \dd{E}_{\pi}(e^{\imath \br{\theta, Z^{K}(0)}}) \dd{E}_{\pi}(e^{\imath \br{\theta, W^{K}(0)}}),
\end{eqnarray}
where $\imath = \sqrt{-1}$ is again the imaginary unit. 

In general, we can easily find an example such that random
  variables $X$ and $Y$ are not independent but weakly independent
  through convolution. However, this may not be the case for
  $Z^{K}(0)$ and $W^{K}(0)$ because they have more structure. To
  further discuss this issue, let us consider the tandem queue example
  of \sectn{introduction} for $d=2$, $K=\{1\}$ and $L=\{2\}$. Since
  $Q^{(K,L)} = \{0\}$ and $Q^{(L,K)} = \{1\}$, we have $W^{K}(t) = 0$
  and $W^{L}(t) = Z_{1}(t)$. Hence, \eq{convolution indep} for $L$
  instead of $K$ is equivalent to that $Z_{1}(0)$ and $Z_{2}(0)$ are
  weakly independent through convolution under $\pi$. We consider the
  implication of this observation below.

We first observe that \eq{alpha} and \cor{marginal 1} yield that
\begin{eqnarray*}
  \alpha_{1} = \lambda_{1} = \frac {2(\beta_{1} - \beta_{0})} {\beta_{0} (c^{2}_{0} + c^{2}_{1})}, \quad \alpha_{2} = \frac {2(\beta_{2} - \beta_{0})} {\beta_{0} (c^{2}_{1} + c^{2}_{2})}, \quad \lambda_{2} = \frac {2(\beta_{2} - \beta_{0})} {\beta_{0} (c^{2}_{0} + c^{2}_{2})} .
\end{eqnarray*}
Hence, $\alpha_{2} = \lambda_{2}$ holds if and only if $c_{0} =
c_{1}$, which is indeed the skew symmetric condition. Thus,
  \eq{convolution indep} implies that the stationary distribution
  $\pi$ has exponential marginal distributions with parameters
  $\lambda_{1}$ and $\lambda_{2}$. However, it is unclear if there exist  parameters $(\beta_i,  c_i)$, $i=0,1,
    2$, for the tandem queue such that   \eq{convolution indep} is
    satisfied, but 
  $\lambda_{2} \ne \alpha_{2}$. Thus, it may be 
interesting to consider the following questions. 

\begin{question}
\label{que:class}
What is a class of SRBM satisfying \eq{convolution indep} ? How can we characterize this class in terms of the modeling primitives ? How much is it larger than the class satisfying the decomposability ?
\end{question}
\begin{question}
\label{que:approximation}
Can the stationary distributions of $Z^{K}(0)$ and $Z^{L}(0)$ serve good approximations for the marginal distributions of the original stationary distribution when \eq{convolution indep} does not hold ? If not, for what class of SRBM can they provide good approximations ?
\end{question}

Obviously, these two questions are closely related. \que{class} is
hard to answer while \que{approximation} may be studied through
numerical experiments. Furthermore, for $d=2$, we know the tail
asymptotics of the one-dimensional marginals and when their tail decay
rates are identical with $\theta^{(i,\ray)}_{i}$ defined in
(\ref{eq:thataray}) (see Theorem 2.2 and 
2.3 of \cite{DaiMiya2013}). This may suggest the class of SRBM for
which the product form approximation using $\theta^{(i,\ray)}_{i}$ is
reasonable. Unfortunately, we do not have
any explicit results yet for the tail decay rates for $d \ge 3$ except for
some special cases. We hope that this tail decay rate problem will be
solved  sometime in the future, and \que{approximation} will be better answered
then. 

Another question is about sufficient conditions for the
decomposability. We partially answered this question by
\thr{decomposition}. It seems hard to extend the arguments in the
proof of this theorem to more general cases. Such an extension is a 
challenging open problem.  

\section{Acknowledgments}

We are grateful to the anonymous referees for their
stimulating questions and helpful comments. This paper was initiated
when MM (the second author) visited JD (the first author) at Cornell
University in July, 2013. MM is grateful to the hospitality of School of
ORIE of Cornell University. JD is supported in part by NSF Grants
CMMI-1030589, CNS-1248117, and CMMI-1335724. MM is supported in part by
JSPS Grant 24310115.  

\bibliographystyle{elsarticle-harv}
\bibliography{dai09062014}{}

\appendix

\section{Some matrix classes}
\label{app:matrix}

We give definitions of some matrix classes in our arguments. One
  can find them in text books for matrices (e.g., see
  \cite{BermPlem1979,CottPangSton1992}). Let $A$ be an $n$-dimensional
  square matrix. Then, $A$ is called an $\sr{S}$-matrix if there is an
  $n$-dimensional vector $v > 0$ such that $A v > 0$, where, for a
  vector $u$, $u>0$
  means each component of $u$ is strictly positive.
 If all principal sub-matrices of $A$ are
  $\sr{S}$-matrices, then $A$ is called a completely-$\sr{S}$ matrix. If
  all principal minors of $A$ are positive, then $A$ is called a
  $\sr{P}$-matrix. Every $\sr{P}$-matrix is an $\sr{S}$-matrix (e.g,
  see Corollary 3.3.5 of \cite{CottPangSton1992}). A $\sr{P}$-matrix
is called an $\sr{M}$-matrix if 
  its off diagonal entries are all non-positive (see Definition 3.11.1
  of \cite{CottPangSton1992} for a $\sr{K}$-matrix, which is another
  name of an $\sr{M}$-matrix used in \cite[Section
  3.13.24]{CottPangSton1992}). 

\section{Proof of Corollary 1 and some related results}
\label{app:corollary 1}

To prove the corollary, we introduce some geometric objects:
\begin{eqnarray*}
 && E = \{\theta \in \dd{R}^{d}; \gamma(\theta) = 0\},\\
 && P^{(i)} =
  \cap_{k \in J \setminus \{i\}}\{\theta \in \dd{R}^{d};
  \gamma_{k}(\theta) = 0\}, \quad i \in J. 
\end{eqnarray*}
The object $E$ is an ellipse in $\dd{R}^d$.  Since $R$ is
invertible and $\theta \in P^{(i)}$ implies that $\br{\theta, R^{(k)}} =
0$ for $k \ne i$, $P^{(i)}$ must be a line going through the origin.
Clearly, for each $i$, $P^{(i)}$ intersects the ellipse $E$ by at most
two points, one of which is the origin. We denote its non-zero
intersection by $\theta^{(i,\ray)}$ if it exists. Otherwise, let
$\theta^{(i,\ray)} = 0$. The following lemma shows that the latter is
impossible by giving an explicit formula for $\theta^{(i,\ray)}$.
Recall that $(Q^{\rs{t}})^{(i)}$ be the $i$th column of
$Q^{\rs{t}}$. We refer to the following fact, obtained as Lemma 3 in
\cite{DaiMiyaWu2014a}. 

\begin{lemma} 
\label{lem:ray}
For each $i\in J$, 
\begin{equation}
  \label{eq:thataray}
  \theta^{(i,\ray)} = \Delta_i (Q^{\rs{t}})^{(i)},
\end{equation}
where $\Delta_i>0$ is defined in (\ref{eq:Deltai}).
\end{lemma}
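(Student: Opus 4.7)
The plan is to exploit the fact that the defining conditions for $P^{(i)}$ are linear in $\theta$, and the column-of-$Q^{\rs{t}}$ form of the answer strongly suggests we should just compute $P^{(i)}$ explicitly and then intersect with the quadric $E$. First I would unfold the definition of $P^{(i)}$. By definition $\gamma_k(\theta) = \langle R^{(k)}, \theta\rangle$, so $\theta \in P^{(i)}$ is equivalent to $R^{\rs{t}}\theta$ having all coordinates equal to zero except possibly the $i$th one, i.e.\ $R^{\rs{t}}\theta = c\, e_i$ for some $c \in \dd{R}$, where $e_i$ is the $i$th standard basis vector. Because $R$ is invertible under \eq{stability}, inverting gives
\begin{equation*}
\theta = c\,(R^{\rs{t}})^{-1} e_i = c\,(Q^{\rs{t}}) e_i = c\,(Q^{\rs{t}})^{(i)}.
\end{equation*}
Thus $P^{(i)}$ is exactly the one-dimensional line $\{c\,(Q^{\rs{t}})^{(i)} : c \in \dd{R}\}$, and in particular $(Q^{\rs{t}})^{(i)} \ne 0$ because $Q^{\rs{t}}$ is invertible.

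Next I would substitute $\theta = c\,(Q^{\rs{t}})^{(i)}$ into $\gamma(\theta) = 0$. Using the explicit form of $\gamma$, the equation becomes the scalar quadratic
\begin{equation*}
-\tfrac{1}{2} c^{2}\,\langle (Q^{\rs{t}})^{(i)}, \Sigma (Q^{\rs{t}})^{(i)}\rangle \;-\; c\,\langle \mu, (Q^{\rs{t}})^{(i)}\rangle \;=\; 0.
\end{equation*}
The two roots are $c = 0$ (giving the origin) and
\begin{equation*}
c \;=\; -\,\frac{2\,\langle \mu, (Q^{\rs{t}})^{(i)}\rangle}{\langle (Q^{\rs{t}})^{(i)}, \Sigma (Q^{\rs{t}})^{(i)}\rangle} \;=\; \Delta_{i},
\end{equation*}
which is exactly formula \eq{Deltai}. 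This is the nonzero intersection $\theta^{(i,\ray)}$, establishing \eq{thataray}.

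Finally I would verify $\Delta_i > 0$ and, in doing so, confirm that the nonzero intersection actually exists (so the degenerate case mentioned just before the lemma never arises). The denominator $\langle (Q^{\rs{t}})^{(i)}, \Sigma (Q^{\rs{t}})^{(i)}\rangle$ is strictly positive because $\Sigma$ is positive definite and $(Q^{\rs{t}})^{(i)} \ne 0$. For the numerator, observe that $\langle \mu, (Q^{\rs{t}})^{(i)}\rangle = \sum_j Q_{ij}\mu_j = (Q\mu)_i$, which is strictly negative by the stability condition $R^{-1}\mu = Q\mu < 0$ in \eq{stability}. Hence $\Delta_i > 0$. There is no real obstacle here: the proof is essentially a one-line computation once $P^{(i)}$ is identified as the line spanned by $(Q^{\rs{t}})^{(i)}$; the only thing to be careful about is to invoke \eq{stability} (not merely invertibility of $R$) to ensure strict positivity of $\Delta_i$.
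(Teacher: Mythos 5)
Your proof is correct. Note that the paper itself does not prove this lemma: it simply quotes it as Lemma 3 of \cite{DaiMiyaWu2014a}, so there is no internal argument to compare against. Your computation is the natural direct one and it is complete: identifying $P^{(i)}$ as the line $\{c\,(Q^{\rs{t}})^{(i)} : c\in\dd{R}\}$ via $R^{\rs{t}}\theta = c\,e_i$, solving the resulting scalar quadratic $\gamma(c\,(Q^{\rs{t}})^{(i)})=0$ for the nonzero root $c=\Delta_i$, and deducing $\Delta_i>0$ from positive definiteness of $\Sigma$ together with $(Q\mu)_i<0$, which follows from \eq{stability} (valid here because the stationary distribution is assumed to exist). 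You also correctly dispose of the degenerate case $\theta^{(i,\ray)}=0$ mentioned just before the lemma. The only stylistic remark is that the identity $\langle \mu, (Q^{\rs{t}})^{(i)}\rangle = (Q\mu)_i$ is worth displaying explicitly, as you did, since it is the one place where the transpose conventions could trip a reader up.
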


\begin{proof}[Proof of \cor{marginal 1}]

It follows from
\cite{Harr1985} that the stationary distribution of a one-dimensional
SRBM with drift $\mu<0$ and variance $\sigma^2$ is exponential with mean
$1/\lambda$, where 
\begin{eqnarray}
\lambda = - \frac{2\mu} {\sigma^2}.
\label{eq:generalLambda}
\end{eqnarray}
We apply \thr{marginals} to $K = \{i\}$.  According to the theorem,
$Z^{\{i\}}(0)$ under $\pi$ is a one-dimensional SRBM with variance
$\tilde{\Sigma}(\{i\})$ and drift $\tilde{\mu}(\{i\})$. Set
\begin{eqnarray*}
  \lambda_{i} = - \frac {2\tilde{\mu}(\{i\})} {\tilde{\Sigma}(\{i\})}.
\end{eqnarray*}
By (\ref{eq:thataray}) and (\ref{eq:generalLambda}), to prove the
corollary it suffices to verify that 
\begin{equation}
  \label{eq:lambda_i}
  \lambda_i  = \theta^{(i,\ray)}_{i}. 
\end{equation}

 We first compute $Q$
and $Q^{(\{i\},\{i\})}$ for this. By \lem{ray}, we have 
\begin{eqnarray*}
  Q = (\Delta_{1}^{-1} \theta^{(1,\ray)}, \Delta_{2}^{-1} \theta^{(2,\ray)}, \ldots, \Delta_{d}^{-1} \theta^{(d,\ray)})^{\rs{t}},
\end{eqnarray*}
and therefore
\begin{eqnarray*}
  Q^{(\{i\},\{i\})} =\Delta_{i}^{-1} \theta^{(i,\ray)}_{i}.
\end{eqnarray*}
Hence,
\begin{eqnarray*}
 \lefteqn{\tilde{\Sigma}(\{i\}) = \Delta_{i}^{2} (\theta^{(i,\ray)}_{i})^{-2} \sum_{j,k} [Q]_{ij} \Sigma_{jk} [Q]_{ik}}\\
  && \quad = \Delta_{i}^{2} (\theta^{(i,\ray)}_{i})^{-2} \sum_{j,k} \Delta_{i}^{-1} \theta^{(i,\ray)}_{j} \Sigma_{jk} \Delta_{i}^{-1} \theta^{(i,\ray)}_{k} \\
  && \quad = (\theta^{(i,\ray)}_{i})^{-2} \br{ \theta^{(i,\ray)}, \Sigma \theta^{(i,\ray)}}\\
  && \quad = - 2 (\theta^{(i,\ray)}_{i})^{-2} \br{\theta^{(i,\ray)}, \mu},
\end{eqnarray*}
where the last equality is obtained since $\gamma(\theta^{(i,\ray)}) = 0$. Similarly, we have
\begin{eqnarray*}
  \tilde{\mu}(\{i\}) = (\theta^{(i,\ray)}_{i})^{-1} \br{\theta^{(i,\ray)}, \mu}.
\end{eqnarray*}
Hence,
\begin{eqnarray*}
\lambda_{i} = - \frac {2\tilde{\mu}(\{i\})} {\tilde{\Sigma}(\{i\})} = \theta^{(i,\ray)}_{i}.
\end{eqnarray*}
This completes the proof of \cor{marginal 1}.
\end{proof}

In what follows, we give a short proof for $\alpha_{i} = \lambda_{i}$ when the skew symmetric condition holds. For this, we use Lemma 2 in \cite{DaiMiyaWu2014a}, which characterizes the condition by the formula
\begin{eqnarray*}
  \gamma(\theta) = \sum_{i=1}^{d} \frac {\Sigma_{ii}} {2R_{ii}} \gamma_{i}(\theta) (\alpha_{i} - \theta_{i}), \qquad \theta \in \dd{R}^{d}.
\end{eqnarray*}
Then, we immediately have $\alpha_{i} = \theta^{(i,\ray)}_{i} = \lambda_{i}$ from the fact that $\gamma(\theta^{(i,\ray)}) = 0$, $\gamma_{i}(\theta^{(i, \ray)}) \ne 0$ and $\gamma_{i}(\theta^{(j,\ray)}) = 0$ for $j \ne i$.

\end{document}